\theoremstyle{plain}
\newtheorem{thm}{Theorem}
\newtheorem{prp}[thm]{Proposition}
\newtheorem{lem}[thm]{Lemma}
\newtheorem{definition}{Definition}
\newtheorem{rem}{Remark}
\newcommand{\R}{\mathbb{R}}
\newcommand{\be}{\begin{equation}}
\newcommand{\ee}{\end{equation}}
\newcommand{\beq}{\begin{eqnarray}}
\newcommand{\eeq}{\end{eqnarray}}
\newcommand{\ffi}{\varphi}
\newcommand{\pat}{\partial}
\newcommand{\p}{\parallel}
\newcommand{\ep}{\epsilon}
\newcommand{\epa}{\epsilon_{1}}
\newcommand{\un}{u_{\nu}}
\newcommand{\xn}{\xi_{\nu}}
\newcommand{\zn}{z_{\nu}}
\newcommand{\betan}{\beta_{\nu}}
\begin{document}
\title{Stochastic incompressible Euler equations in a two-dimensional domain}
\date{}
\author{Hakima Bessaih\footnote{University of Wyoming, Department of Mathematics, Dept. 3036, 1000
East University Avenue, Laramie WY 82071, United States, bessaih@uwyo.edu} }
 
\maketitle

\begin{abstract}
The aim of these notes is to give an overview of the current results about existence and uniqueness of solutions for the stochastic Euler equation driven by a Brownian noise in a two-dimensional bounded domain.
\end{abstract}

\section{Introduction}
We are concerned with the following system of Euler equations 
\be\left\{\begin{array}{l}
           \frac{\pat u}{\pat t} + u\cdot\nabla u + \nabla p =
           f + G(u) \frac{\pat W}{\pat t},\\
           \nabla\cdot u = 0,\\
           u\cdot n|_{\pat D} = 0,\\
           u(x,0) = u_{0}.
           \end{array}
   \right.\label{SEE}\ee
Here $D$ is a regular open domain of $\mathbb{R}^{2}$ with boundary $\pat D$, $n$ is
the exterior normal to $\pat D$, $u=(u_{1},u_{2})$ is the velocity field of
the fluid, $p$ is the pressure field, $u_{0}$ is the initial velocity field,
$f$ is the body force field, $W$ is a Brownian motion and $G$ is an 
operator acting on the solution. 

The deterministic case (when $G(u)=0$) has been extensively studied and we refer to the book of P. L. Lions \cite{PLions96} for a concise and complete analysis. Moreover, the book has a very nice introduction to the fundamental equations for newtonian fluids for which the Euler equation is an approximation. The above boundary condition is called the slip boundary condition,
and  is very different from the no-slip condition satisfied by the classical Navier-Stokes equations.

A few papers have been dealing with the two-dimensional stochastic Euler equation. 
Among them \cite{BF99} and \cite{K02} for pathwise global weak solutions, \cite{B99} for martingale global solutions in Hilbert spaces and \cite{BP01} for martingale global solution in some Banach spaces of $L^{q}$-type and \cite{GV} for some smooth solutions. For the whole space $\R^{d}$, smooth local solutions have been studied in 
\cite{MV} for $d=2, 3$ and \cite{K09} for $d=3$. Let us mention that in the three-dimensional case only local solutions can be proven, see \cite{MV, K09, GV}. The longtime behavior of a modified version of these equations have been studied in \cite{B00, B08} through their stochastic attractors and stationary solutions. In particular, a linear dissipation was added. We don't know any result concerning invariant measures or stationary solutions for the system \eqref{SEE} without adding a dissipation. This problem 
seems to be related to the topic  of turbulence theory and the dissipation of energy and/or enstrophy.

These notes are devoted to the study of the Euler equations in a bounded two-dimensional domain. 
The study is focused on the pathwise and martingale global weak solutions, that is, solutions with values in $W^{1,2}$ or  $W^{1,q}$-type.  

In Section 3 the stochastic Euler equation is  driven by an additive noise. We are able to prove existence and uniqueness of solutions. The approach that we have chosen is a little bit long but relies only on elementary facts and is essentially self-contained. The uniqueness is based on a method of Yudovich
(which requires proper modifications in the stochastic case) revisited by \cite{bardos} and \cite{kato}.  

Section 4 is devoted to system \eqref{SEE} with a fully multiplicative noise. In this section, we chose a different approach  with respect to previous one. This is an approach that uses deep regularity properties of a Stokes operator associated to equation \eqref{SEE}. We prove the existence of global $W^{1,2}$-valued martingale solutions by means of a compactness argument. In this case, the uniqueness is an open problem.

Section 5 is devoted to system \eqref{SEE} with a multiplicative noise in some Banach spaces. 
The existence of global  $W^{1,q}, (q\geq 2)$-valued martingale solutions are proved by means a compactness argument. This  result is due to \cite{BP01}.

\section{Preliminaries and notations.}
Let ${\cal V}$ be the space of infinitely differentiable vector field $u$
on $D$ with compact support strictly contained in $D$, satisfying
$\nabla\cdot u=0$. We introduce the space $H$ of all measurable vector fields 
$u: D\longrightarrow \R^{2}$ which are square integrable, divergence free, 
and tangent to the boundary

$$H = \left\{u\in\left[L^{2}(D)\right]^{2};\ \nabla\cdot u=0\ {\rm in}\
D,\ u\cdot n=0\ {\rm on}\ \pat D\right\};$$
the meaning of the condition $u\cdot n =0\ {\rm on}\ \pat D$ for such vector
fields is explained for instance in \cite{temam84}. The space $H$ is a separable
Hilbert space with inner product of $\left[L^{2}(D)\right]^{2}$, denoted in
the sequel by $<.,.>$ (norm $|.|$). Let $V$ be the following subspace of $H$;

$$V = \left\{u\in\left[H^{1}(D)\right]^{2};\ \nabla\cdot u=0\ {\rm in}\
D,\ u\cdot n=0\ {\rm on}\ \pat D\right\};$$
The space $V$ is a separable Hilbert space with inner product of
$\left[H^{1}(D)\right]^{2}$ (norm $\p.\p$). Identifying $H$ with its dual
space $H'$, and $H'$ with the corresponding natural subspace of the dual
space $V'$, we have the standard triple $V\subset H\subset V'$ with continuous
dense injections. We denote the dual pairing between $V$ and $V'$ by the
inner product of $H$.

Let $H$ be a separable Hilbert space. Given $p>1$ and $\gamma\in(0,1)$, 
let $W^{\gamma,2}(0,T;H)$ be the Sobolev space of all $u\in L^{p}(0,T;H)$
such that
$$\int_{0}^{T}\int_{0}^{T}\frac{|u(t)-u(s)|^{p}}{|t-s|^{1+\gamma p}}dtds
<\infty$$
endowed with the norm
$$\p u\p^{p}_{W^{\gamma,2}(0,T;H)} = \int_{0}^{T}|u(t)|^{p}dt
+ \int_{0}^{T}\int_{0}^{T}\frac{|u(t)-u(s)|^{p}}{|t-s|^{1+\gamma p}}dtds.$$
For $q\in (1,\infty)$, let us set $H^{1,q}:=H \cap W^{1,q}$. Let $K$ another Hilbert space. 
Let us denote by  $L_{2}(K,H)$ the set of 
Hilbert-Schmidt operators from $K$ to $H$ and by $R(K, W^{1,q})$ the space of all
$\gamma$-radonifying mappings from $K$ into $W^{1,q}$, we refer to \cite{BP01} for a detailed introduction to these spaces.

In the sequel,  for a vector field $u=(u_{1},u_{2})$  and $(x_{1},x_{2})\in D$, we will denote the rotational of $u$ by
$$\nabla\wedge u: = \frac{\pat u_{2}}{\pat x_{1}}
-\frac{\pat u_{1}}{\pat x_{2}}.$$
\section{The stochastic Euler equation with additive noise}
The results of this section can be found in \cite{BF99}. 
Similar results using different techniques can also be found in \cite{K02}.

\subsection{Functional setting, assumptions and main results}
\be\left\{\begin{array}{lr}
                \frac{\pat u}{\pat t} + (u\cdot\nabla)u + \nabla p =
                 f + \frac{\pat W}{\pat t},&{\rm in}\ (0,T)\times D\\
                 \nabla\cdot u=0, &{\rm in}\ (0,T)\times D\\
                 u\cdot n = 0, &{\rm on}\ (0,T)\times \pat D\\
                 u|_{t=0}=u_{0}.&{\rm in}\ D
           \end{array}
     \right. \label{1}\ee

Here, $W=W(t,\omega), t\geq 0, \omega\in \Omega$ is  an $H$-valued stochastic process on the probability space $(\Omega, \mathcal{F}, P)$ (for instance a Wiener process), subject to the following regularity in space: for $P$-a..e. $\omega\in\Omega$,   
\be W\in C\left([0,T];\left[H^{4}(D)\right]^{2}\cap V\right)\label{2}\ee
with the mapping $\omega\rightarrow W(\cdot,\omega)$ measurable in this topology, and
\be \nabla\wedge W = 0\ \ {\rm on}\ \ (0,T)\times\pat D.\label{3}\ee
To simplify the expression (even if some partial results require less 
assumptions), we impose throughout the paper the following conditions on 
$f$ and $u_{0}$

\be u_{0}\in V,\ \ f\in L^{2}(0,T; V).\label{4}\ee

Here are the main results of this section:

\begin{thm}\label{t1}
Under the assumptions \eqref{2}, \eqref{3} and \eqref{4}, there exists 
(at least) an adapted process $u(t,\omega)$ solution of 
\eqref{1} in the sense that for $P$-a.e. $\omega\in\Omega$
$$u(.,\omega)\in C(0,T;H)\cap L^{2}([0,T]; V)$$
and 
\beq
<u(t),\phi> &+& \int_{0}^{t}<(u(s)\cdot\nabla)u(s),\phi>ds =
<u_{0},\phi>\nonumber\\
&+& \int_{0}^{t}<f(s),\phi>ds + <W(t),\phi>,\label{5}
\eeq
for every $t\in [0,T]$ and every $\phi\in V$
\end{thm}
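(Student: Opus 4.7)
The plan is to reduce \eqref{1} to a pathwise random PDE by the standard change of variable $v(t,\omega) := u(t,\omega) - W(t,\omega)$. By assumption \eqref{2}, $W(\cdot,\omega)$ lives in $C(0,T;[H^4(D)]^2\cap V)$, so the subtraction preserves the divergence-free and tangency conditions, and $v$ should satisfy, for $P$-a.e.\ $\omega$, the deterministic-looking equation
\be
\frac{\pat v}{\pat t} + \bigl((v+W)\cdot\nabla\bigr)(v+W) + \nabla p = f,
\qquad \nabla\cdot v=0,\ v\cdot n|_{\pat D}=0,\ v(0)=u_0.
\label{planv}
\ee
Because the formal $\pat W/\pat t$ cancels out, \eqref{planv} can be interpreted in the classical weak sense $\omega$ by $\omega$, and the problem is now a 2D deterministic Euler equation with a smooth-in-space, continuous-in-time ``drift perturbation'' $W(\cdot,\omega)$.

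First I would fix $\omega$ and build approximate solutions $v_n$ by a Faedo-Galerkin scheme on the eigenbasis of the Stokes operator associated to the space $V$. On each finite-dimensional subspace the projected version of \eqref{planv} is an ODE with locally Lipschitz right-hand side and is globally solvable thanks to an energy estimate: testing against $v_n$ and using $\nabla\cdot(v_n+W)=0$ to kill $\langle((v_n+W)\cdot\nabla)v_n,v_n\rangle$, only the lower-order terms involving $W$ and $f$ remain, so a Gronwall argument yields a bound on $\p v_n\p_{L^\infty(0,T;H)}$ depending on $\omega$ but uniform in $n$. The more delicate a priori bound is in $V$, and this is where one must exploit the 2D vorticity structure: setting $\zeta_n = \nabla\wedge v_n$, one derives a transport-type equation
$$
\frac{\pat \zeta_n}{\pat t} + (v_n+W)\cdot\nabla\zeta_n
 = \nabla\wedge f - (v_n+W)\cdot\nabla(\nabla\wedge W),
$$
and the hypothesis \eqref{3} that $\nabla\wedge W$ vanishes on $\pat D$ is exactly what prevents boundary production of vorticity, allowing $L^p$ (and even $L^\infty$) estimates of $\zeta_n$ by Gronwall on the characteristics. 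Combined with elliptic regularity for $-\Delta + \textrm{(Leray projector)}$, this controls $\p v_n\p_{L^2(0,T;V)}$.

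With uniform bounds in $L^\infty(0,T;H)\cap L^2(0,T;V)$ and a corresponding bound on $\pat v_n/\pat t$ in a negative-order space obtained from the equation, an Aubin--Lions type compactness argument (again pathwise) produces a subsequence $v_n \to v$ strongly in $L^2(0,T;H)$ and weakly-$*$ in $L^\infty(0,T;H)\cap L^2(0,T;V)$. Strong $L^2$ convergence is precisely what is needed to pass to the limit in the quadratic term $\langle((v_n+W)\cdot\nabla)(v_n+W),\phi\rangle$ against a smooth test function $\phi\in V$. Undoing the change of variables, $u := v + W$ lies in $C(0,T;H)\cap L^2(0,T;V)$ and satisfies \eqref{5}.

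The main obstacle will be the measurability and adaptedness of the constructed solution, since the existence argument above is carried out $\omega$ by $\omega$. Here I would work throughout with the Galerkin solutions $v_n$, which are measurable in $\omega$ by the Cauchy--Lipschitz theorem applied to the ODE driven by the measurable data $W(\cdot,\omega)$; measurability of the limit then follows from the standard trick of choosing the extraction via a measurable selection (e.g.\ working with the Borel subset of a suitable Polish space where the a priori bounds hold). Technically, obtaining the $L^\infty$ bound on the vorticity uniformly in $n$, which is essential for the Yudovich-type uniqueness proof later in the section, is the subtlest step and the one where the regularity \eqref{2}--\eqref{3} of the noise is really used.
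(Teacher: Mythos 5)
Your reduction via $v = u - W$ and your strategy for measurability and adaptedness do match the paper (Proposition \ref{u} uses exactly the change of variable $z = u - W$, and measurability is handled through the measurable-selection Lemma \ref{mes}). But the core of your argument --- a Faedo--Galerkin scheme applied \emph{directly} to the Euler equation, with the uniform $V$-bound extracted from a vorticity transport equation for $\zeta_n = \nabla\wedge v_n$ --- does not work as stated. The Galerkin projection $P_n$ does not commute with the curl: the projected equation contains $P_n\bigl[((v_n+W)\cdot\nabla)(v_n+W)\bigr]$, and taking the curl of this term does not produce the transport term $(v_n+W)\cdot\nabla\zeta_n$. Hence $\zeta_n$ satisfies no closed transport equation, there are no characteristics to integrate along, and the Gronwall/maximum-principle bounds you invoke are simply unavailable at the approximate level. (In the periodic setting the $L^2$ vorticity bound survives because the Fourier projection commutes with the curl and is self-adjoint, but in a bounded domain with the tangency condition this fails; and even in the periodic case the $L^p$, $p\neq 2$, and $L^\infty$ bounds are lost, since $|\zeta_n|^{p-2}\zeta_n$ is not an admissible test function through $P_n$.) Without a uniform bound in a space compactly embedded into $H$ --- that is, without the $V$-bound --- your Aubin--Lions step, and with it the strong $L^2(0,T;H)$ convergence needed to pass to the limit in the quadratic term, collapses.

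This is precisely why the paper does not Galerkin-approximate Euler directly: it regularizes by the Navier--Stokes system \eqref{6} with the \emph{free} boundary condition $\nabla\wedge u = 0$ on $\pat D$ (not the no-slip condition). For that approximation the vorticity $\beta_\nu$ genuinely solves the parabolic problem \eqref{21} with homogeneous Dirichlet data (Proposition \ref{beta}, with uniqueness of generalized solutions obtained by duality via Lemma \ref{parab}); the hypothesis \eqref{3} enters in making $z = \beta_\nu - \nabla\wedge W$ satisfy \eqref{23} with $z=0$ on the boundary, energy estimates on \eqref{23} give the bound \eqref{22} uniformly in $\nu$, the elliptic system \eqref{24} converts this into a uniform $C([0,T];H^1)$ bound on $u_\nu$ (Proposition \ref{u2}), and existence follows from the vanishing viscosity limit $\nu\to 0$. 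Note also that under the hypotheses of Theorem \ref{t1} one only has $\nabla\wedge u_0 \in L^2(D)$, so the $L^\infty$ vorticity estimates you aim for are neither available nor needed for existence; they belong to the uniqueness part (Theorem \ref{t2} and Lemma \ref{up}), under the additional assumptions stated there. If you wish to keep a direct approximation of Euler, you would have to replace the bare Galerkin truncation by a scheme that preserves the vorticity structure (for instance, mollifying the transport velocity in the vorticity formulation), which is a genuinely different proof; as written, the central a priori estimate of your proposal has no derivation.
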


\begin{thm}\label{t2}
If in addition $\nabla\wedge u_{0}\in L^{\infty}(D)$, $\nabla\wedge f\in L^{\infty}([0,T]\times D)$
and $(\Delta\nabla\wedge W)\in L^{\infty}([0,T]\times D)$, the solution of problem \eqref{1} is unique.
\end{thm}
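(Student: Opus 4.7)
The plan is a pathwise adaptation of Yudovich's classical uniqueness argument for two-dimensional Euler. The decisive feature of the additive-noise setting is that, if $u_{1}$ and $u_{2}$ are two solutions of \eqref{1} built on the \emph{same} Brownian path $W(\cdot,\omega)$, their difference $w=u_{1}-u_{2}$ and the associated pressure $q=p_{1}-p_{2}$ satisfy, path-by-path, the purely deterministic system
\[
\pat_{t} w + (u_{1}\cdot\nabla)w + (w\cdot\nabla)u_{2} + \nabla q = 0,\qquad \nabla\cdot w=0,\qquad w|_{t=0}=0.
\]
The stochastic input has thus been eliminated from the equation for $w$, and all subsequent estimates can be carried out $\omega$ by $\omega$, provided we first secure the pathwise regularity of $u_{1},u_{2}$ required by Yudovich's scheme.

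Step 1: a pathwise $L^{\infty}$ bound on the vorticity. Set $\eta_{j}=\nabla\wedge u_{j}$ and $\tilde\eta_{j}=\eta_{j}-\nabla\wedge W$. Taking the curl of \eqref{1} and using the two-dimensional identity $\nabla\wedge[(u\cdot\nabla)u]=(u\cdot\nabla)\eta$ (valid pointwise for smooth divergence-free $u$ and extended to the weak setting by mollification), one checks that $\tilde\eta_{j}$ solves, pathwise, the linear transport equation
\[
\pat_{t}\tilde\eta_{j} + (u_{j}\cdot\nabla)\tilde\eta_{j} = \nabla\wedge f - (u_{j}\cdot\nabla)(\nabla\wedge W),
\]
with divergence-free transport velocity. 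The right-hand side lies in $L^{\infty}([0,T]\times D)$ pathwise, since $\nabla\wedge f\in L^{\infty}$ by assumption and $\nabla(\nabla\wedge W)\in L^{\infty}$ follows from \eqref{2}, the boundary condition \eqref{3} and $\Delta(\nabla\wedge W)\in L^{\infty}$ by elliptic regularity on $D$; the initial datum $\nabla\wedge u_{0}$ is in $L^{\infty}$ as well. The $L^{\infty}$ maximum principle for transport equations with divergence-free drift then yields $\nabla\wedge u_{j}\in L^{\infty}([0,T]\times D)$ pathwise, and hence $u_{j}\in L^{\infty}([0,T]\times D)$ via the two-dimensional Biot--Savart/elliptic estimate $\|u_{j}\|_{L^{\infty}}\leq C(\|\nabla\wedge u_{j}\|_{L^{\infty}}+|u_{j}|)$.

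Step 2: the Yudovich energy estimate. Testing the equation for $w$ against $w$ in $H$ cancels $\langle(u_{1}\cdot\nabla)w,w\rangle$ by $\nabla\cdot u_{1}=0$ and leaves
\[
\tfrac{1}{2}\tfrac{d}{dt}|w|^{2}=-\int_{D}(w\cdot\nabla)u_{2}\cdot w\,dx.
\]
For each $p\geq 2$, with $p'=p/(p-1)$, Hölder together with the Calderón--Zygmund bound $\|\nabla u_{2}\|_{L^{p}}\leq Cp(\|\nabla\wedge u_{2}\|_{L^{p}}+|u_{2}|)$, the interpolation $\|w\|_{L^{2p'}}^{2}\leq |w|^{2(p-1)/p}\|w\|_{L^{\infty}}^{2/p}$, and the pathwise $L^{\infty}$-bounds from Step 1 yield a differential inequality of the form
\[
\tfrac{d}{dt}|w|^{2}\leq C\,p\,|w|^{2(p-1)/p},
\]
with $C$ independent of $p$. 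Setting $z=|w|^{2/p}$ turns this into $z'\leq C$; integrating with $z(0)=0$ gives $|w(t)|^{2}\leq (Ct)^{p}$ for every $p\geq 2$. On the interval where $Ct<1$, sending $p\to\infty$ forces $w(t)\equiv 0$, and iterating covers $[0,T]$.

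The principal difficulty lies in Step 1: rigorously deriving the vorticity transport equation for a solution that a priori only belongs to $C([0,T];H)\cap L^{2}(0,T;V)$, and then propagating the $L^{\infty}$-bound on $\tilde\eta_{j}$ in a bounded domain with the slip boundary condition. One must justify the nonlinear identity $\nabla\wedge[(u\cdot\nabla)u]=(u\cdot\nabla)\eta$ at this level of regularity and invoke a DiPerna--Lions-type renormalization to reach the maximum principle; this is precisely where assumptions \eqref{2}, \eqref{3} and $\Delta(\nabla\wedge W)\in L^{\infty}$ are essential. Once the pathwise $L^{\infty}$-bound on $\nabla\wedge u_{j}$ is available, Step 2 is a standard deterministic computation.
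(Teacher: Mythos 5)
Your overall strategy is the paper's own: reduce to a pathwise deterministic problem, prove an $L^{\infty}$ bound on the vorticity together with a Calder\'on--Zygmund bound $\|\nabla u\|_{L^{p}}\leq Cp(\cdot)$, and run Yudovich's energy argument with $p\to\infty$. Your Step 2 is correct and in fact slightly cleaner than the paper's: you interpolate $L^{2p'}$ between $L^{2}$ and $L^{\infty}$ and substitute $z=|w|^{2/p}$, where the paper interpolates between $L^{2}$ and $L^{p}$, invokes Kato's inequality $|v|_{L^{p}}\leq Cp^{1/2}|v|_{H^{1}(D)}$ (Lemma 10) and the uniform $H^{1}$ bound, arriving at the same conclusion via \eqref{37}. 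But Step 1 contains a genuine circularity. The forcing in your transport equation includes $(u_{j}\cdot\nabla)(\nabla\wedge W)$, and to place this in $L^{\infty}([0,T]\times D)$ you need $u_{j}\in L^{\infty}([0,T]\times D)$ --- which in your scheme is only deduced \emph{afterwards}, from the $L^{\infty}$ vorticity bound via the Biot--Savart estimate. Knowing $\nabla(\nabla\wedge W)\in L^{\infty}$ is not enough, since a priori $u_{j}$ lies only in $L^{\infty}(0,T;H^{1})$, hence in $L^{\infty}(0,T;L^{p})$ for all finite $p$ but not in $L^{\infty}$ in space. The paper breaks exactly this circle by a bootstrap at the viscous level: multiplying the parabolic vorticity equation \eqref{23} by $|z_{\nu}|^{2}z_{\nu}$ gives $\beta_{\nu}\in L^{\infty}(0,T;L^{4}(D))$ (computation \eqref{31}), the elliptic system \eqref{24} then gives $u_{\nu}\in L^{\infty}(0,T;W^{1,4}(D))\hookrightarrow L^{\infty}(Q)$ by Sobolev embedding, and only then is the maximum principle \eqref{32} applied. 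Your proof needs an analogous intermediate step before the maximum principle can be invoked.

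The second divergence is structural, and it is where your proposal defers rather than proves. You derive the vorticity equation directly for the inviscid weak solutions and yourself flag as the ``principal difficulty'' the justification of $\nabla\wedge[(u\cdot\nabla)u]=(u\cdot\nabla)\eta$ and of a renormalized (DiPerna--Lions-type) maximum principle at the regularity $C([0,T];H)\cap L^{2}(0,T;V)$, in a bounded domain with slip boundary condition --- but you do not supply this argument, so as written the uniqueness proof is conditional on it. The paper avoids the issue entirely: all estimates are performed on the Navier--Stokes approximations \eqref{6}, where the shifted vorticity solves the parabolic Dirichlet problem \eqref{23} with full regularity guaranteed by Lemma \ref{parab} and Propositions 4--7, and the uniform-in-$\nu$ bound \eqref{30} is then passed to the limit $\nu\to 0$; uniqueness is proved for solutions obeying \eqref{30}. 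A symptom of the mismatch is your use of the hypothesis $\Delta\nabla\wedge W\in L^{\infty}([0,T]\times D)$: you invoke it only to get $\nabla(\nabla\wedge W)\in L^{\infty}$, which already follows from \eqref{2} since $H^{4}(D)$ regularity gives $\nabla\wedge W\in H^{3}(D)\subset C^{1}(\bar{D})$ in two dimensions; in the paper this hypothesis is genuinely needed, entering through the term $\nu\Delta(\nabla\wedge W)$ in $g$ and the $\nu$-uniform maximum-principle bound \eqref{32}. To repair your proof, either import the paper's viscous bootstrap wholesale, or carry out an $L^{p}$-bootstrap (with growth tracked in $p$) directly at the transport level before attempting the $L^{\infty}$ bound.
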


\subsection{Approximating Navier-Stokes equations.}
The main effort to prove the previous theorems consists in the analysis of
the following equation. For every $\nu>0$ we consider the equation of 
Navier-Stokes type
\be\left\{\begin{array}{lr}
                \frac{\pat u}{\pat t} + (u\cdot\nabla) u + \nabla p =
                 \nu\Delta u + f + \frac{\pat W}{\pat t},
                 &{\rm in}\ (0,T)\times D\\
                 \nabla\cdot u=0, &{\rm in}\ (0,T)\times D\\
                 \nabla\wedge u=0, &{\rm on}\ (0,T)\times \pat D\\
                 u\cdot n = 0, &{\rm on}\ (0,T)\times \pat D\\
                 u|_{t=0}=u_{0},&{\rm in}\ D
           \end{array}
     \right. \label{6}\ee
where 
$$\nabla\wedge u=\frac{\pat u_{2}}{\pat x_{1}}
-\frac{\pat u_{1}}{\pat x_{2}}$$
is the vorticity. Due to the boundary condition $\nabla\wedge u=0$, this is 
not the classical equation for a viscous fluid in a boundary domain, but it 
can be studied in a similar way as we shall show below.

Let us consider in each point $\sigma_{0}\in\pat D$ the reference frame 
composed by the 
exterior normal $n$ and the tangent $\tau$. Let $(x_{\tau},x_{N})$ the 
components of points of $R^{2}$ and $(u_{\tau},u_{N})$ the components of 
a vector with respect to this reference.
If $(n_{\tau},n_{N})$ are the components of the exterior normal $n$, the 
curvature of $\pat D$ at the point $\sigma_{0}$ is given by the relation

\be k(\sigma_{0}) = -\frac{\pat n_{\tau}}{\pat x_{\tau}}.\label{7}\ee
On the other hand, the rotational of a field $u$ will be written in the 
reference$(\tau,n)$

\be \nabla\wedge u=\frac{\pat u_{N}}{\pat x_{\tau}}
-\frac{\pat u_{\tau}}{\pat x_{N}}.\label{8}\ee
The relation 

$$ a(u,v)=\int_{D}\nabla u\cdot\nabla v 
-\int_{\pat D}k(\sigma)u(\sigma)\cdot v(\sigma)d\sigma,$$
defines a continuous and coercive bilinear form on $V$ and
$$b(u,v,w)=\int_{D}(u\cdot\nabla)v\cdot w,$$
defines a continuous trilinear form on $V$.
The following properties on $a$ and $b$ will be used

For arbitrary $\epsilon>0$ (see \cite{Lions61})
$$\int_{\pat D}k(\sigma)|u(\sigma)|^{2}d\sigma\leq\epsilon|\nabla u|^{2} 
+ C(\epsilon)|u|^{2}.$$
and for all $u, v, w\in V$ 
$$b(u,v,w)=-b(u,w,v)\quad {\rm and}\quad b(u,v,v)=0.$$

\begin{definition}
We say that a stochastic process $u(t,\omega)$ is a weak solution for \eqref{6} if
$$u(.,\omega)\in C([0,T]; H)\cap L^{2}(0,T; V)$$
for $P$-a.e. $\omega\in\Omega$,
such that for every $t\in [0,T]$ and every $\phi\in V$, it satisfies
$P$-a.e. the equation
\beq
& &<u(t),\phi> + \int_{0}^{t}a(u(s),\phi)ds + 
\int_{0}^{t}b(u(s),u(s),\phi)ds =\nonumber\\
& &<u_{0},\phi> + \int_{0}^{t}<f(s),\phi>ds + <W(t),\phi>.
\label{9}\eeq
\end{definition}

\begin{prp}\label{u}
There exists a unique weak solution $u(.,\omega)$ of the previous 
Navier-Stokes equations. Moreover, for every function
$$\ffi(.)\in C([0,T]; H)\cap L^{2}(0,T; V)\cap H^{1}(0,T; V')$$
such that
$$\ffi(T)=0,$$
we have
\beq
& &\int_{0}^{T}\left<u(s),\frac{\pat\ffi}{\pat s}(s) 
+ (u(s)\cdot\nabla)\ffi(s)\right>ds - \int_{0}^{T}a(u(s),\ffi(s))ds 
=\nonumber\\
&- &<u_{0},\ffi(0)> - \int_{0}^{T}<f(s),\ffi(s)>ds 
+ \int_{0}^{T}\left<W(s),\frac{\pat\ffi}{\pat s}(s)\right>.
\label{10}\eeq
\end{prp}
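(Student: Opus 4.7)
The plan is to remove the noise from the problem by a standard translation trick. Setting $v(t,\omega) := u(t,\omega) - W(t,\omega)$ and using that by \eqref{2}--\eqref{3} the path $W(\cdot,\omega)$ is continuous with values in $H^4(D)^2 \cap V$ and satisfies $\nabla\wedge W = 0$ on $\partial D$, one checks that $v$ should solve pathwise a \emph{deterministic} Navier--Stokes-type system of the form
$$\frac{\pat v}{\pat t} + ((v+W)\cdot\nabla)(v+W) - \nu\Delta(v+W) + \nabla p = f,$$
with $\nabla\cdot v = 0$, $v\cdot n = 0$ and $\nabla\wedge v = 0$ on $\pat D$, and $v(0) = u_0$. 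Because $W(\cdot,\omega)$ is smooth in space, the extra term $W$ is an admissible perturbation: for each fixed $\omega$ we are in the classical framework of 2D Navier--Stokes with slip boundary condition, and no stochastic calculus is needed.

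For existence I would run a Faedo--Galerkin scheme on this deterministic equation. Take $\{e_k\}\subset V$ an orthonormal basis of $H$ (e.g. eigenfunctions of $a(\cdot,\cdot)$), so that the natural boundary condition $\nabla\wedge u=0$ is automatically encoded in the bilinear form $a$ via Green's formula; the essential condition $u\cdot n=0$ is encoded in $V$. On $V_n=\mathrm{span}\{e_1,\dots,e_n\}$ the Galerkin projection gives a locally Lipschitz ODE system, hence a local solution $v_n$. Testing with $v_n$ itself, using coercivity of $a$, the cancellation $b(v_n,v_n,v_n)=0$, the trace and interpolation inequalities stated after \eqref{8} to absorb the boundary contribution of $k(\sigma)$, and the regularity of $W(\omega)$, Gronwall's lemma yields a uniform bound for $v_n$ in $L^\infty(0,T;H)\cap L^2(0,T;V)$ (pathwise, with constants depending measurably on $\omega$). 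Deriving the usual $L^{4/3}(0,T;V')$ bound on $\pat_t v_n$ from the equation and invoking Aubin--Lions compactness gives strong convergence in $L^2(0,T;H)$, enough to pass to the limit in the quadratic term $b(v_n,v_n,\phi)$. Measurability of the limit in $\omega$ is preserved along the way since all estimates and projections depend continuously on $W(\omega)$. Uniqueness in dimension two follows by the standard argument: the difference $w=u^{(1)}-u^{(2)}$ satisfies a linear equation with $b(w,u^{(2)},w)$ controlled by $C|w|\,\|w\|\,\|u^{(2)}\|$ (2D interpolation), so coercivity of $a$ plus Gronwall forces $w\equiv 0$.

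The variational identity \eqref{10} is then obtained by testing \eqref{9} against $\varphi(s)$, integrating from $0$ to $T$, and integrating by parts in time using $\varphi(T)=0$ and $u\in C([0,T];H)$; the antisymmetry $b(u,u,\varphi)=-b(u,\varphi,u)=-\langle u,(u\cdot\nabla)\varphi\rangle$ puts the nonlinear term in the required form, and the continuity of $W$ in time allows the classical $t\mapsto s$ integration by parts $\int_0^T\langle W(s),\pat_s\varphi\rangle ds = -\langle W(0),\varphi(0)\rangle - \int_0^T\langle \pat_s W,\varphi\rangle ds$, which after rearranging reproduces the term $\int_0^T\langle W(s),\pat_s\varphi(s)\rangle ds$ in \eqref{10}.

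The main delicate point, I expect, is not the existence itself (which is by now textbook in 2D) but a careful treatment of the slip boundary condition in the a priori estimates: the boundary integral $\int_{\pat D}k(\sigma)|u|^2 d\sigma$ appearing through $a$ has no definite sign and must be absorbed using the $\varepsilon$-inequality after \eqref{8}, and one needs the forcing from $W$ to be controlled in a space compatible with this absorption, which is precisely why the strong spatial regularity \eqref{2} and the boundary condition \eqref{3} on $W$ are assumed.
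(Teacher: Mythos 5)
Your proposal is correct and follows essentially the same route as the paper: its proof is exactly a pathwise argument via the change of variable $z=u-W$ (legitimate thanks to \eqref{2}--\eqref{3}), a Faedo--Galerkin approximation of the resulting deterministic equation \eqref{11}, a priori estimates in $L^{2}(0,T;V)\cap W^{1,2}(0,T;V')$, classical compactness to pass to the limit, and measurability and adaptedness obtained because the unique limit of the ($\omega$-measurable, adapted) Galerkin approximations requires no $\omega$-dependent subsequences. One small correction: your displayed identity $\int_{0}^{T}\langle W(s),\partial_{s}\varphi(s)\rangle ds=-\langle W(0),\varphi(0)\rangle-\int_{0}^{T}\langle \partial_{s}W,\varphi\rangle ds$ is ill-posed since $W$ is only continuous in time (it is not of bounded variation, let alone differentiable); in deriving \eqref{10} one integrates by parts in time only on $z=u-W\in H^{1}(0,T;V')$, after which the term $\int_{0}^{T}\langle W(s),\partial_{s}\varphi(s)\rangle ds$ appears directly without ever differentiating $W$, which is consistent with your final identity, so nothing in your argument actually depends on that formula.
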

\begin{proof}
The proof of the proposition above is based on a pathwise argument through the change of variable
$z=u-W$. Fix $\omega\in\Omega$, then the equation \eqref{9} becomes 
\be \left\{\begin{array}{l}
                 <\dot{z},\phi> + {a}(z,\phi) + b(z+W,z+W,\phi)=
                 <f,\phi> - {a}(W,\phi),\vspace{3mm}\\
                 z(0) = u_{0}.
           \end{array}
     \right.\label{11}\ee 
In particular, the proof will be based on the following steps:
\begin{enumerate}
\item An approximate solution $z_{m}$ of \eqref{11} will be constructed for example through  Faedo-Galerkin method. In particular, $z_{m}$ is uniquely determined. 
\item Some a priori estimates will be performed on $z_{m}$ in 
$L^{2}([0,T]; V)\bigcap  W^{1,2}([0,T]; V')$.
\item The passage to the limit on $m$ will be performed using classical compactness arguments, see the appendix.

\item All the statements of the previous steps have been made for any given $\omega$. In this way, we know that the solution $u$ corresponding to a given $\omega$ is the limit (in the appropriate sense) of the entire sequence of Galerkin approximation (thus we don't need to work with subsequences depending on $\omega$). Since these approximations are measurable in $\omega$, the limit function 
$\omega\rightarrow u$ is also measurable.

\item The adaptedness of $u$ is being obtained by a limiting procedure over successive approximations 
that are adapted at each step (in particular $z_{m}$ is an adapted process). 
\end{enumerate}

\end{proof}

\begin{prp}\label{beta}
Let $\beta = \nabla\wedge u$, where $u$ is the solution of the previous 
Navier-Stokes equations (hence in particular $\beta\in L^{2}((0,T)\times D)$).
For every function 
$$\psi(.)\in C([0,T]; H_{0}^{1}(D))\cap L^{2}(0,T; H^{2}(D))
\cap H^{1}(0,T; L^{2}(D))$$
such that
$$\psi(T)=0,$$
we have
\beq
& &\int_{0}^{T}\left<\beta(s),\frac{\pat\psi}{\pat s}(s)
+ \Delta\psi(s) + (u(s)\cdot\nabla)\psi(s)\right>ds 
= -<\nabla\wedge u_{0},\psi(0)> \nonumber\\
& & - \int_{0}^{T}<\nabla\wedge f(s),\psi>ds
+ \int_{0}^{T}\left<\nabla\wedge W(s),\frac{\pat\psi}{\pat s}(s)\right>.
\label{16}
\eeq
\end{prp}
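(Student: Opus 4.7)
The plan is to insert the rotated test function $\ffi := \nabla^{\perp}\psi = (-\pat_{x_{2}}\psi,\pat_{x_{1}}\psi)$ into the weak equation \eqref{10} of Proposition~\ref{u} and to translate each term into its ``vorticity'' counterpart by integration by parts. First I would check admissibility: since $\psi(t)\in H_{0}^{1}(D)$ the tangential derivative $\pat_{\tau}\psi$ vanishes on $\pat D$, hence $\nabla^{\perp}\psi\cdot n=\pat_{\tau}\psi=0$ on $\pat D$, while $\nabla\cdot\nabla^{\perp}\psi=0$ automatically; so $\ffi(t)\in V$. The hypotheses on $\psi$ also give $\ffi\in C([0,T];H)\cap L^{2}(0,T;V)\cap H^{1}(0,T;V')$ with $\ffi(T)=0$, making $\ffi$ a legitimate test function.

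The basic identity $\langle v,\nabla^{\perp}\psi\rangle=-\langle\nabla\wedge v,\psi\rangle$ (one integration by parts, using $\psi|_{\pat D}=0$) immediately converts the linear pieces $\langle u_{0},\ffi(0)\rangle$, $\langle f,\ffi\rangle$, $\langle W,\pat_{s}\ffi\rangle$ and $\langle u,\pat_{s}\ffi\rangle$ in \eqref{10} into the corresponding objects in \eqref{16}. For the nonlinear term I would use the 2D pointwise identity $(u\cdot\nabla)u=\nabla(|u|^{2}/2)-\beta\,u^{\perp}$ with $u^{\perp}=(u_{2},-u_{1})$: the gradient part is killed against the divergence-free, tangent-to-boundary $\ffi$, and a direct computation of $u^{\perp}\cdot\nabla^{\perp}\psi=-(u\cdot\nabla)\psi$ gives $b(u,u,\ffi)=\langle\beta,(u\cdot\nabla)\psi\rangle$, hence $\langle u,(u\cdot\nabla)\ffi\rangle=-b(u,u,\ffi)$.

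The key step is the identity $a(u,\nabla^{\perp}\psi)=\langle\beta,\Delta\psi\rangle$. Integrating by parts $\int_{D}\nabla u\cdot\nabla\ffi\,dx$, using the 2D identity $-\Delta u=(\pat_{x_{2}}\beta,-\pat_{x_{1}}\beta)$ for divergence-free $u$, and then integrating by parts once more against $\Delta\psi$, the intermediate boundary terms involving $\psi$ and $\beta$ vanish (both functions are zero on $\pat D$), producing the bulk term $\int_{D}\beta\,\Delta\psi\,dx$ plus a residual boundary term $\int_{\pat D}(\pat_{n}\psi)(\pat_{n}u_{\tau})\,d\sigma$ from the outermost integration by parts. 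To dispose of this term I would use the curvilinear expression of the curl on $\pat D$ (the precise version of \eqref{8} including the curvature correction from \eqref{7}), namely $\beta|_{\pat D}=\pat_{\tau}u_{N}-\pat_{N}u_{\tau}+k\,u_{\tau}$. Combined with $u_{N}=0$ and $\beta=0$ on $\pat D$ this gives $\pat_{N}u_{\tau}=k\,u_{\tau}$ on $\pat D$, which makes the residual boundary term equal to $\int_{\pat D}k\,u_{\tau}(\pat_{n}\psi)\,d\sigma$; this exactly cancels the curvature contribution $-\int_{\pat D}k\,u\cdot\ffi\,d\sigma$ built into the definition of $a$, leaving $a(u,\ffi)=\langle\beta,\Delta\psi\rangle$. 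Assembling all the identities in \eqref{10} yields \eqref{16}.

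The most delicate point is making these boundary manipulations rigorous for a weak solution with only $u\in L^{2}(0,T;V)$, since traces of $\nabla u$ on $\pat D$ are not classically defined at this level of regularity. The natural remedy is to derive \eqref{16} first at the Galerkin level, working with the smooth approximations $z_{m}+W$ constructed in the proof of Proposition~\ref{u} (the extra regularity \eqref{2} on $W$ ensures that the $z_{m}$ are regular enough for all boundary integrations by parts to be classical), and then to pass $m\to\infty$. The convergences already established in Proposition~\ref{u} suffice: each term of \eqref{16} is continuous under strong $L^{2}(0,T;V)$ convergence of $z_{m}$ and the induced $L^{2}((0,T)\times D)$ convergence of the vorticities, while the bilinear term $\int_{0}^{T}\langle\beta,(u\cdot\nabla)\psi\rangle\,ds$ is controlled via Ladyzhenskaya-type 2D embeddings using the regularity of $\psi$.
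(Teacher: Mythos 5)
Your argument is correct and arrives at \eqref{16}, and its skeleton (test \eqref{10} with $\ffi=\nabla^{\perp}\psi$, convert each term to its vorticity counterpart) is the same as the paper's; the interesting divergence is in how the nonlinear term and the form $a$ are treated. The paper works componentwise: the convective term is reduced by two successive integrations by parts using $\nabla\cdot u=0$, and the boundary integrals generated by $a(u,\nabla^{\bot}\psi)$ are shown to vanish using only $u\cdot n|_{\pat D}=0$ (in the differentiated form $\frac{\pat}{\pat x_{\tau}}(n_{\tau}u_{\tau}+n_{N}u_{N})=0$), the relation $(D_{N}\psi)n_{\tau}-(D_{\tau}\psi)n_{N}=0$, and the curvature formula \eqref{7} --- crucially, no trace of $\beta$ on $\pat D$ is ever invoked, so the computation applies (formally) directly to the weak solution of \eqref{9}. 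You instead use the vector identities $(u\cdot\nabla)u=\nabla\bigl(|u|^{2}/2\bigr)-\beta u^{\perp}$ and $-\Delta u=\nabla^{\perp}\beta$, and cancel the curvature integral in $a$ via $\pat_{N}u_{\tau}=k\,u_{\tau}$, which you derive from $\beta|_{\pat D}=0$. This is more conceptual --- it makes transparent why the form $a$ encodes the natural boundary condition $\nabla\wedge u=0$ --- but it genuinely requires the pointwise vanishing of the vorticity on $\pat D$, which a weak solution does not satisfy; your Galerkin remedy repairs this only if the basis is chosen inside the set $D(A)$ of Section 4 (equivalently, eigenfunctions of the operator associated with $a$), so that together with assumption \eqref{3} the approximate vorticities $\nabla\wedge(z_{m}+W)$ do vanish on $\pat D$ --- a requirement your proposal leaves implicit and should state. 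Two smaller remarks: the limit passage neither needs nor has strong $L^{2}(0,T;V)$ convergence of $z_{m}$ (Proposition \ref{u} yields weak $L^{2}(0,T;V)$ plus strong $L^{2}((0,T)\times D)$ convergence, and this weak--strong pairing already handles $\int_{0}^{T}\left<\beta_{m},(u_{m}\cdot\nabla)\psi\right>ds$ since $\nabla\psi\in L^{2}(0,T;H^{1}(D))$); and in fairness, the paper's intermediate steps implicitly use third derivatives of $\psi$ and traces of $D^{2}\psi$, so it is formal at the same points where you regularize --- your version regularizes in $u$, the paper's would be completed by density in $\psi$, and each choice buys rigor where the other is loose.
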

\begin{proof}
We plug in particular $\ffi = \nabla^{\bot}\psi$, where 
$\nabla^{\bot} = (D_{2},-D_{1})$ in \eqref{10}; we obtain 

\beq
& &\int_{0}^{T}<u,\frac{\pat\nabla^{\bot}\psi}{\pat s}> 
+ \int_{0}^{T}<u,u\cdot\nabla\nabla^{\bot}\psi> 
- \int_{0}^{T}a(u,\nabla^{\bot}\psi) = \nonumber\\
& & -<u_{0},\nabla^{\bot}\psi_{0}>
- \int_{0}^{T}<f,\nabla^{\bot}\psi> 
+ \int_{0}^{T}<W,\frac{\pat\nabla^{\bot}\psi}{\pat s}>.
\label{17}\eeq
Using the fact $\psi|_{\pat D}=0$ and the integration by part for the first 
term on the left hand side of the above inequality we obtain

$$<u,\frac{\pat\nabla^{\bot}\psi}{\pat s}> = 
-<\nabla\wedge u,\frac{\pat\psi}{\pat s}>.$$
We apply an integration by part for the second term on the left hand 
side of \eqref{17}; we have
\beq
<u,u\cdot\nabla\nabla^{\bot}\psi> &=& 
\int_{D}u_{i}u_{j}D_{j}(\nabla^{\bot}\psi)_{i}\nonumber\\
&=&  \int_{D}u_{i}u_{j}D_{j}D_{i}^{\bot}\psi\nonumber\\
&=& - \int_{D}D_{i}^{\bot}(u_{i}u_{j})D_{j}\psi 
+ \int_{\pat D}u_{i}u_{j}D_{j}\psi n_{i}^{\bot}\nonumber\\
&=& -\int_{D}(D_{i}^{\bot}u_{i})u_{j}D_{j}\psi
- \int_{D}u_{i}(D_{j}^{\bot}u_{j}D_{j}\psi\nonumber\\
&+&\int_{\pat D}u_{i}u_{j})D_{j}\psi n_{i}^{\bot}
\label{18}\eeq
Another integration by part for the second term on the right hand side 
of \eqref{18} yields that
                       
\beq 
\int_{D}u_{i}(D_{j}^{\bot}u_{j})D_{j}\psi 
&=&- \int_{D}D_{j}(u_{i}(D_{i}^{\bot}u_{j}))\psi 
+ \int_{\pat D}u_{i}(D_{i}^{\bot}u_{j})\psi n_{j}\nonumber\\
&=& - \int_{D}D_{j}u_{i}(D_{i}^{\bot}u_{j})\psi
 - \int_{D}u_{i}(D_{j}D_{i}^{\bot}u_{j})\psi\nonumber\\
&+& \int_{\pat D}u_{i}(D_{i}^{\bot}u_{j})\psi n_{j}.
\label{19}
\eeq
Because of 
$$\nabla.u = 0,$$
the first and the second term on the right hand side of \eqref{19} are equal to
zero and we obtain
$$\int_{D}u_{i}(D_{j}^{\bot}u_{j})D_{j}\psi = 
\int_{\pat D}u_{i}(D_{i}^{\bot}u_{j})\psi n_{j}.$$
So that

$$<u,u\cdot\nabla\nabla^{\bot}\psi> = - <\beta,u\cdot\nabla\psi> 
- \int_{\pat D}\left(u_{i}(D_{i}^{\bot}u_{j})\psi n_{j}
- u_{i}u_{j}D_{j}D_{i}^{\bot}\psi n_{i}^{\bot}\right).$$
Since we have the following hypothesis on the boudary $\pat D$
$$u\cdot n|_{\pat D} = 0,\ \  {\rm and}\ \ 
\nabla^{\bot}\psi\cdot n|_{\pat D}=0,$$
we have
$$ \int_{\pat D}\left(u_{i}(D_{i}^{\bot}u_{j})\psi n_{j}
- u_{i}u_{j}D_{j}D_{i}^{\bot}\psi n_{i}^{\bot}\right)
= \int_{\pat D}u\cdot n\left(u_{1}D_{2}\psi-D_{1}\psi u_{2}\right) = 0,$$
which implies that 

$$<u,u\cdot\nabla\nabla^{\bot}\psi> = - <\beta,u\cdot\nabla\psi>.$$
We apply the integration by part twice for the third term on the left 
hand side of \eqref{17}, we get

\beq
a(u,\nabla^{\bot}\psi) &=& \int_{D}D_{i}u_{j}D_{i}D_{j}^{\bot}\psi  
- \int_{\pat D}ku\cdot\nabla^{\bot}\psi\nonumber\\
&=&-\int_{D}u_{j}D_{i}^{2}D_{j}^{\bot}\psi
+ \int_{\pat D}\left(u_{j}D_{i}D_{j}^{\bot}\psi n_{i} - 
ku\cdot\nabla^{\bot}\psi\right)\nonumber\\
&=&\int_{D}D_{j}^{\bot}u_{j}D_{i}^{2}\psi\nonumber\\
& &+ \int_{\pat D}\left(u_{j}D_{i}D_{j}^{\bot}\psi n_{i} 
- u_{j}D_{i}^{2}\psi n_{j}^{\bot}
- ku_{j}D_{j}^{\bot}\psi\right).
\label{20}
\eeq
Since $u\cdot n=0\ \ {\rm on}\ \ \pat D$, we have:
$$\frac{\pat}{\pat x_{\tau}}(n_{\tau}u_{\tau}+n_{N}u_{N})=0.$$
We have also
$$(D_{N}\psi)n_{\tau}-(D_{\tau}\psi)n_{N}=0.$$
Using \eqref{7} and the above boundary conditions, the last boundary 
integral in \eqref{20} is equal to zero.\\
It remains to apply the integration by part for the integrals in the right 
hand side of \eqref{17} to have the result.
\end{proof}

\begin{rem}
When a function $\beta\in L^{2}(0,T)\times D)$ satisfies the previous 
variational equation given in Proposition \ref{beta}, then we call it a {\it generalized} {\it solution} of the 
following equations
\be\left\{\begin{array}{lr}
                \frac{\pat\beta}{\pat t} + (u\cdot\nabla)\beta 
                 = \nu\Delta\beta + \nabla\wedge f 
                 + \frac{\pat\nabla\wedge W}{\pat t},
                 &{\rm in}\ (0,T)\times D\\
                 \beta = 0, &{\rm on}\ (0,T)\times \pat D\\
                 \beta|_{t=0}=\nabla\wedge u_{0},&{\rm in}\ D
           \end{array}
     \right. \label{21}\ee
\end{rem}
In particular in \eqref{21}, $\beta=\nabla\wedge u$. Now, let us state  the uniqueness of generalized solutions of  \eqref{21}

\begin{prp}\label{beta-uniqueness}
There exists a unique generalized solution 
$\beta\in L^{2}\left((0,T)\times D\right)$ 
of the previous equation \eqref{21}.
\end{prp}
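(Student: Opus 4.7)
Since equation \eqref{21} is linear in $\beta$ with the advection field $u$ already fixed by Proposition \ref{u}, uniqueness reduces to showing that any generalized solution $\tilde\beta\in L^{2}((0,T)\times D)$ associated with zero initial vorticity and zero forcing must vanish. The natural approach is duality: test the variational equation against a function $\psi$ produced by solving the backward adjoint problem.

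Given an arbitrary $g\in C_{c}^{\infty}((0,T)\times D)$, I would construct $\psi$ by solving
\begin{equation*}
\frac{\pat\psi}{\pat s}+\nu\Delta\psi+(u\cdot\nabla)\psi=g\quad\text{in }(0,T)\times D,\qquad \psi|_{\pat D}=0,\qquad \psi(T)=0,
\end{equation*}
via Faedo--Galerkin on a basis of $H_{0}^{1}(D)\cap H^{2}(D)$ (equivalently, solving the forward problem for $\phi(s)=\psi(T-s)$). The basic energy estimate—testing against $\psi$ and exploiting that the divergence-free, tangent field $u$ satisfies $\int_{D}(u\cdot\nabla\psi)\psi=0$—bounds $\psi$ in $L^{\infty}(0,T;L^{2})\cap L^{2}(0,T;H_{0}^{1})$. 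Testing against $-\Delta\psi$ and using the 2D Gagliardo--Nirenberg inequality $|\nabla\psi|_{L^{4}}\leq C|\nabla\psi|_{L^{2}}^{1/2}|\Delta\psi|_{L^{2}}^{1/2}$ yields
$$\Bigl|\int_{D}(u\cdot\nabla\psi)\,\Delta\psi\Bigr|\leq C|u|_{L^{4}}|\nabla\psi|_{L^{2}}^{1/2}|\Delta\psi|_{L^{2}}^{3/2}\leq \tfrac{\nu}{2}|\Delta\psi|_{L^{2}}^{2}+C_{\nu}|u|_{L^{4}}^{4}|\nabla\psi|_{L^{2}}^{2}.$$
A Gronwall argument then closes the estimate in $L^{\infty}(0,T;H_{0}^{1})\cap L^{2}(0,T;H^{2})$ provided $\int_{0}^{T}|u|_{L^{4}}^{4}\,ds<\infty$; this follows from Proposition \ref{u} via Ladyzhenskaya's 2D inequality $|u|_{L^{4}}^{4}\leq C|u|_{L^{2}}^{2}|u|_{H^{1}}^{2}$ and the fact that $u\in L^{\infty}(0,T;H)\cap L^{2}(0,T;V)$. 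Reading $\pat_{s}\psi$ off the equation then puts $\psi$ in $H^{1}(0,T;L^{2})$, so $\psi$ belongs to the admissible class of test functions of Proposition \ref{beta}.

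Plugging this $\psi$ into the variational identity \eqref{16} satisfied by $\tilde\beta$, the entire right-hand side vanishes, and the left-hand side collapses to $\int_{0}^{T}\langle\tilde\beta(s),g(s)\rangle\,ds$. Hence $\int_{0}^{T}\int_{D}\tilde\beta\,g\,dx\,ds=0$ for every $g\in C_{c}^{\infty}((0,T)\times D)$, forcing $\tilde\beta\equiv 0$. The main obstacle is the regularity of the dual solution: because $u$ sits only in $L^{2}(0,T;V)$, the term $u\cdot\nabla\psi$ is not a harmless perturbation, and closing the $H^{2}$ estimate requires precisely the 2D interpolation bound $u\in L^{4}(0,T;L^{4})$—the step that genuinely uses the two-dimensional setting and would fail in dimension three.
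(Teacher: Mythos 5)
Your proposal is correct and follows essentially the same route as the paper: a duality argument in which uniqueness is reduced to the solvability, with $C([0,T];H_{0}^{1})\cap L^{2}(0,T;H^{2})\cap H^{1}(0,T;L^{2})$ regularity, of the backward adjoint advection--diffusion problem, so that the expression $\frac{\pat\psi}{\pat s}+\nu\Delta\psi+(u\cdot\nabla)\psi$ sweeps out a dense subset of $L^{2}((0,T)\times D)$ and forces $\beta=0$. The only difference is presentational: the paper delegates the dual solvability to Lemma \ref{parab} in the appendix, whereas you re-derive it inline via Galerkin and the same 2D interpolation estimates (Ladyzhenskaya/Gagliardo--Nirenberg with $u\in L^{4}(0,T;L^{4})$) that the appendix proof uses.
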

\begin{proof}
Assume that $\beta'$ and $\beta''$ are generalized solutions, and 
set $\beta=\beta'-\beta''$. Then $\beta$ is a generalized solution
with data equal to zero, i.e. it satisfies

$$\int_{0}^{T}\left<\beta(s),\frac{\pat\psi}{\pat s}(s)
+ \nu\Delta\psi(s) + (u(s)\cdot\nabla)\psi(s)\right>ds = 0,$$
for all $\psi(.)$ as in Proposition \ref{beta}. Now using Lemma \ref{parab}, 
when $\psi(.)$ varies, the expression 
$\frac{\pat\psi}{\pat s}(s) + \nu\Delta\psi(s) + (u(s)\cdot\nabla)\psi(s)$
describes a dense set in $L^{2}((0,T)\times D)$ (in fact the whole space).
Hence,
$$\int_{0}^{T}\left<\beta(s),\widetilde{\psi}(s)\right>ds = 0,$$
for an arbitrary $\widetilde{\psi}\in L^{2}((0,T)\times D)$ which implies that
$\beta = 0$.
\end{proof}

\begin{prp}
The solution $\beta$ given by Proposition \ref{beta} satisfies
$$\beta\in C\left([0,T];L^{2}(D)\right)\cap 
L^{2}\left(0,T;H_{0}^{1}(D)\right).$$
Moreover, it satisfies
\be\p\beta\p_{C([0,T];L^{2}(D))}\leq C,\label{22}\ee
where the constant is independent of $\nu$ (it depends on $W$ and 
$u_{0}$).
\end{prp}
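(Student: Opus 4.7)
The plan is to substitute $\eta := \beta - \nabla\wedge W$ in order to reduce \eqref{21} to a parabolic problem with homogeneous Dirichlet data and no stochastic forcing, and then run a standard $L^{2}$ energy estimate on $\eta$. Since $\nabla\wedge W = 0$ on $\pat D$ by \eqref{3} and $\beta|_{\pat D} = 0$, the new unknown vanishes on the boundary, and in the generalized sense of Proposition \ref{beta} it satisfies
$$\frac{\pat\eta}{\pat t} + (u\cdot\nabla)\eta = \nu\Delta\eta + \nu\Delta(\nabla\wedge W) - (u\cdot\nabla)(\nabla\wedge W) + \nabla\wedge f,$$
with $\eta(0) = \nabla\wedge u_{0} - \nabla\wedge W(0)$, the stochastic time derivative having been absorbed into the change of variable.

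To make the energy computation rigorous I would work at the level of the Faedo-Galerkin approximation $u_{m} = z_{m} + W$ of Proposition \ref{u}, choosing the Galerkin basis among eigenfunctions of the Stokes operator satisfying $\phi\cdot n = 0$ and $\nabla\wedge\phi = 0$ on $\pat D$, so that $\eta_{m} := \nabla\wedge u_{m} - \nabla\wedge W = \nabla\wedge z_{m}$ is smooth in space, $H^{1}$ in time, and vanishes on $\pat D$. Multiplying its vorticity equation by $\eta_{m}$ and integrating by parts yields
$$\frac{1}{2}\frac{d}{dt}|\eta_{m}|^{2} + \nu|\nabla\eta_{m}|^{2} = -\nu\langle\nabla(\nabla\wedge W),\nabla\eta_{m}\rangle - \langle(u_{m}\cdot\nabla)(\nabla\wedge W),\eta_{m}\rangle + \langle\nabla\wedge f,\eta_{m}\rangle,$$
where the convective term $\langle(u_{m}\cdot\nabla)\eta_{m},\eta_{m}\rangle$ vanishes because $\nabla\cdot u_{m} = 0$, $u_{m}\cdot n = 0$ on $\pat D$, and $\eta_{m}|_{\pat D} = 0$.

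Each right-hand side term must then be bounded uniformly in $\nu\in(0,\nu_{0}]$. Young's inequality absorbs $\frac{\nu}{2}|\nabla\eta_{m}|^{2}$ from the first term on the left and leaves $\frac{\nu_{0}}{2}\|\nabla(\nabla\wedge W)\|^{2}_{L^{2}}$, finite by \eqref{2}; the second is controlled by $\|\nabla(\nabla\wedge W)\|_{L^{\infty}}|u_{m}||\eta_{m}|$, with $\nabla(\nabla\wedge W)\in L^{\infty}$ via the 2D Sobolev embedding $H^{2}\hookrightarrow L^{\infty}$ applied to $W\in[H^{4}]^{2}$, and with $|u_{m}|$ controlled in $L^{\infty}(0,T;H)$ uniformly in $\nu$ and $m$ by the energy estimate underlying Proposition \ref{u}; the third is handled by Cauchy-Schwarz using $\nabla\wedge f\in L^{2}(0,T;L^{2})$ from \eqref{4}. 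Gronwall's inequality then delivers a bound on $|\eta_{m}(t)|$ uniform in $m$ and $\nu$; passing to the limit $m\to\infty$, identifying the limit with $\beta-\nabla\wedge W$ via the uniqueness of generalized solutions (Proposition \ref{beta-uniqueness}), and adding $\nabla\wedge W\in C([0,T];H^{3})$ from \eqref{2} yields \eqref{22}.

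The remaining regularity is inexpensive: $\beta\in L^{2}(0,T;H_{0}^{1}(D))$ for fixed $\nu>0$ follows by integrating the energy identity in time, while continuity into $L^{2}(D)$ is a routine Lions-Magenes argument since $\pat_{t}\eta\in L^{2}(0,T;H^{-1})$ via the equation. The main obstacle throughout is enforcing $\nu$-independence of every constant on the right; this is precisely what dictates the $H^{4}$ regularity on $W$ imposed in \eqref{2}, which simultaneously supplies the $L^{\infty}$ control on $\nabla(\nabla\wedge W)$ and leaves enough room to absorb the $\nu$-dependent dissipative cross term without loss.
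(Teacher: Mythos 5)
Your proposal is correct in substance and reaches the estimate by the same core strategy as the paper: the change of variable $z=\beta-\nabla\wedge W$ (legitimate because of \eqref{3}), the $L^{2}$-energy identity in which $\int_{D}((u\cdot\nabla)z)z=0$, absorption of the $\nu$-dependent terms so that all constants are uniform in $\nu$ (with $|\nabla\nabla\wedge W|_{L^{\infty}}$ supplied by \eqref{2} via Sobolev embedding), Gronwall, and identification of the result with $\beta$ through the uniqueness of generalized solutions, Proposition \ref{beta-uniqueness}. Where you genuinely diverge is in how the energy computation is made rigorous. The paper does not return to the nonlinear Galerkin scheme at all: it treats \eqref{23} as a \emph{linear} parabolic problem with the fixed drift $u\in L^{2}(0,T;[H^{1}(D)]^{2})$ already furnished by Proposition \ref{u}, notes that $\nabla\wedge u_{0}\in L^{2}(D)$ and $g\in L^{2}(0,T;H^{-1}(D))$, and invokes the auxiliary Lemma \ref{parab} to obtain a solution $z\in C([0,T];L^{2}(D))\cap L^{2}(0,T;H_{0}^{1}(D))\cap H^{1}(0,T;H^{-1}(D))$ on which the energy identity is then justified directly; the Galerkin machinery is confined to the proof of that lemma, where the equation is linear. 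You instead re-run the Faedo--Galerkin approximation of the nonlinear Navier--Stokes system and take curls of the approximants $u_{m}$, and here your sketch glosses over the one delicate step: the curl of a Galerkin solution does not automatically satisfy a closed vorticity equation, because the Galerkin projection need not commute with $\nabla\wedge$. With your chosen eigenbasis (eigenfunctions of $A$ with $\phi_{k}\cdot n=0$ and $\nabla\wedge\phi_{k}=0$ on $\pat D$) this can be repaired, essentially via the identity $\nabla^{\bot}(\nabla\wedge\phi_{k})=\lambda_{k}\phi_{k}$ modulo a gradient (invisible against divergence-free test fields) and a re-derivation of the boundary cancellations of Proposition \ref{beta} at each level $m$ --- but that verification is real work your proposal asserts rather than performs, and it is precisely the work the paper's linearization avoids. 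What each route buys: yours produces bounds uniform in $m$ and $\nu$ simultaneously at the approximate level, which could be recycled elsewhere; the paper's is more economical, since $u$ is already in hand, the $z$-equation is linear in $z$, and the drift regularity $u\in L^{2}(0,T;[H^{1}(D)]^{2})$ is exactly the hypothesis of Lemma \ref{parab} (so, as in your last paragraph, $\pat_{t}z\in L^{2}(0,T;H^{-1}(D))$ and continuity into $L^{2}(D)$ come for free, with no circular appeal to Proposition \ref{u2}).
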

\begin{proof} 
We prove that there exists a solution $\beta$ of equation \eqref{21} with 
such regularity. Since it is automatically a generalized solution (as it can 
be verified as Proposition \ref{beta}), it coincides with the generalized solution 
given by Proposition \ref{beta-uniqueness}.\\
Setting formally $z=\beta -\nabla\wedge W$ we get the equation
\be\left\{\begin{array}{lr}
                \frac{\pat z}{\pat t} + (u\cdot\nabla)z
                 = \nu\Delta z + g, &{\rm in}\ (0,T)\times D\\
                 z = 0, &{\rm on}\ (0,T)\times \pat D\\
                 z|_{t=0} = v_{0},&{\rm in}\ D
           \end{array} \right. \label{23}\ee
where
$$g=\nabla\wedge f-(u\cdot\nabla)(\nabla\wedge W)
+\nu\Delta(\nabla\wedge W).$$
Since $\nabla\wedge u_{0}\in L^{2}(D)$ and $g\in L^{2}(0,T;H^{-1}(D))$,
there exists a unique solution
$$z\in C\left([0,T];L^{2}(D)\right)\cap
L^{2}\left(0,T;H_{0}^{1}(D)\right)\cap H^{1}\left(0,T;H_{0}^{-1}(D)\right).$$
By Lemma \ref{parab}, it is strightforward to see that 
$\beta:=z+\nabla\wedge W$
is a solution of the first equation of \eqref{38} and has the regularity 
required by the previous proposition. Moreover, since 
$\int_{D}((u\cdot\nabla)z)z = 0$
\beq
& &\frac{1}{2}\frac{d}{dt}|z|_{L^{2}(D)}^{2} =
\int_{D}(\nu\Delta z + g - (u\cdot\nabla)z)\Delta z
\nonumber\\
& &= -\nu|z|_{H^{1}(D)}^{2} + C|z|_{L^{2}(D)}|g|_{L^{2}(D)}.\nonumber
\eeq
Hence for all $t\in [0,T]$,
\beq
|z(t)|_{L^{2}(D)}^{2}&\leq& |\nabla\wedge u_{0}|_{L^{2}(D)}^{2} 
+ C(\int_{0}^{T}|z(s)|_{L^{2}(D)}^{2}ds
+\int_{0}^{T}(|\nabla\wedge f(s)|_{L^{2}(D)}^{2}\nonumber\\ 
&+&|\nabla\nabla\wedge W(s)|_{L^{\infty}(D)}|u(s)|_{L^{2}(D)}
+|\Delta\nabla\wedge W(s)|_{L^{2}(D)}^{2})ds).\nonumber
\eeq
Using Gronwall's Lemma and some previous estimates completes the proof.
\end{proof}

\begin{prp}\label{u2}
The solution $u$ given by proposition \ref{u} satisfies   
$$u\in C\left([0,T];H^{1}(D)\right)\cap
L^{2}\left(0,T;H^{2}(D)\right).$$
Moreover, it satisfies
$$\p u\p_{C([0,T];H^{1}(D))}\leq C$$
where the constant is independent of $\nu$.
\end{prp}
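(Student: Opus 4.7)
The plan is to reconstruct $u$ from its vorticity $\beta=\nabla\wedge u$ together with the constraints $\nabla\cdot u=0$ in $D$ and $u\cdot n=0$ on $\partial D$, and then to leverage the bounds on $\beta$ furnished by the previous proposition. This is the natural route because Proposition \ref{u} already places $u$ in $C([0,T];H)\cap L^{2}(0,T;V)$, while the previous proposition controls $\beta$ in $C([0,T];L^{2}(D))\cap L^{2}(0,T;H_{0}^{1}(D))$ with a bound on the first factor that does not see $\nu$. What remains is an elliptic gain in spatial regularity.

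The key analytic input is the standard div-curl/elliptic estimate in a smooth two-dimensional domain: for any $v\in H$,
\begin{equation*}
\|v\|_{H^{1}(D)}\leq C\bigl(|v|+|\nabla\wedge v|\bigr),
\qquad
\|v\|_{H^{2}(D)}\leq C\bigl(|v|+\|\nabla\wedge v\|_{H^{1}(D)}\bigr),
\end{equation*}
the latter being valid because the slip condition $\nabla\wedge u=0$ on $\partial D$ used in \eqref{6} places $\beta$ in $H_{0}^{1}(D)$, so that the reconstruction (say via a stream function $\psi$ with $-\Delta\psi=\beta$, $\psi=0$ on $\partial D$, and $u=\nabla^{\bot}\psi$) fits into the usual Dirichlet elliptic regularity theory on the smooth domain $D$. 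I would first quote these estimates (or briefly derive them from the stream-function representation).

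Applying them pointwise in $t$, the first inequality together with the uniform bound \eqref{22} on $\beta$ and the $C([0,T];H)$ bound on $u$ obtained in Proposition \ref{u} yields
\begin{equation*}
\sup_{t\in[0,T]}\|u(t)\|_{H^{1}(D)}\leq C\bigl(\sup_{t}|u(t)|+\sup_{t}|\beta(t)|\bigr)\leq C,
\end{equation*}
with $C$ independent of $\nu$. Continuity in $H^{1}$ follows from the continuity of $u$ and $\beta$ in $L^{2}$ using the same estimate applied to time differences $u(t)-u(s)$. The second inequality, integrated in time, combined with $\beta\in L^{2}(0,T;H_{0}^{1}(D))$ from the previous proposition, gives $u\in L^{2}(0,T;H^{2}(D))$, which completes the asserted regularity.

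The only delicate point is justifying the $H^{2}$ elliptic estimate uniformly up to the boundary with the slip condition; this is where the smoothness of $\partial D$ (assumed at the outset) and the vanishing of $\beta$ on $\partial D$ (built into \eqref{6} and reflected in Proposition \ref{beta-uniqueness}) enter, and where the price for the pointwise-in-$\nu$ $L^{2}H^{2}$ bound appears---note that the $\nu$-uniformity is asserted only for the $C([0,T];H^{1})$ norm, matching exactly what the uniform estimate \eqref{22} on $\beta$ in $L^{2}$ delivers, whereas the $L^{2}H^{2}$ control inherits the $\nu$-dependence of $\|\beta\|_{L^{2}(0,T;H_{0}^{1})}$. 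No further passage to the limit or stochastic argument is needed, since the proposition is a purely pathwise consequence of the preceding deterministic regularity statements for $\beta$.
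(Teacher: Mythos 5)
Your proposal is correct and takes essentially the same approach as the paper: the paper likewise reconstructs $u$ from the vorticity through the elliptic system \eqref{24} ($\Delta u=-\nabla^{\bot}\beta$, $\beta|_{\partial D}=0$, $u\cdot n|_{\partial D}=0$) and concludes from the $\nu$-uniform bound \eqref{22} together with Proposition \ref{u}. The only cosmetic difference is that the paper derives the $H^{1}$ bound \eqref{26} directly, multiplying the system by $u$ and absorbing the boundary term $\int_{\partial D}\nabla u\cdot u\cdot n$ via the trace inequality of \cite{Lions61}, whereas you invoke the equivalent div-curl/stream-function elliptic estimates---your write-up being, if anything, more explicit about the $L^{2}(0,T;H^{2})$ part, the time continuity, and the $\nu$-dependence bookkeeping, all of which the paper leaves implicit.
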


\begin{proof}
Observe that since $\nabla\cdot u=0$, $u$ satisfies the following 
elliptic system

\be\left\{\begin{array}{l}
                \Delta u = - \nabla^{\bot}\beta,\\
                \beta|_{\pat D} = 0,\\
                u\cdot n|_{\pat D} = 0.
          \end{array}
    \right.\label{24}\ee
   
We multiply the first equation of \eqref{24} by $u$ and integrate over $D$.
Since $\beta|_{\pat D}=0$, we obtain

\be|\nabla u|_{L^{2}(D)}^{2} = \int_{\pat D}\nabla u\cdot u\cdot n 
+ <\beta,\nabla\wedge u>.\label{25}\ee 
For an arbitrary $\epsilon >0$ we have (see \cite{Lions61})
$$\int_{\pat D}\nabla u\cdot u\cdot n\leq\epsilon |\nabla u|_{L^{2}(D)}^{2}
+C(\epsilon)|u|_{L^{2}(D)}^{2},$$ 
Which yields that 
\be|\nabla u|_{L^{2}(D)}^{2}\leq C\left(|\beta|_{L^{2}(D)}^{2} 
+ |u|_{L^{2}(D)}^{2}\right).\label{26}\ee
The result follows from Proposition \ref{u}, \eqref{22} and  \eqref{26}.
\end{proof}

\subsection{Proof of the main results}

We are now able to proof the main results of the previous section

\subsubsection{Proof of Theorem \ref{t1} (Existence)}

Let us emphasize again that the path of the Brownian motion $W$ is given and that our arguments 
are $\omega$-wise 
(hence purely deterministic). In particular, our statements about uniform boundedness have to be understood $\omega$-wise.

From Proposition \ref{u2}, we have that $\un$ weak solution of Navier Stokes 
equations  \eqref{6} is uniformly bounded in
$L^{\infty}(0,T; [H^{1}(D)]^{2})$ so in $L^{\infty}(0,T; V)$. This implies 
in particular that 
\be \lim_{\nu\rightarrow 0}a(\un,\phi)=0,\ \ \ \forall\phi\in V.\label{27}\ee
Since $\un$ satisfies  \eqref{9}, $\un$  remains bounded in
$L^{2}(0,T; V')$. The embedding $V\subset H$ being compact, this implies 
that we can extract from $\un$ a subsequence (also called $\un$) which 
converge weakly in $L^{2}(0,T; V)$  and strongly in $\left(L^{2}([0,T]\times D)\right)^{2}$
(use Lemma \ref{compact1}). On the other hand $\un'$ converge weakly in
$L^{2}(0,T; V)$; we deduce that $u\in C([0,T]; H)$  (see \cite{temam84}) and verifies that
\be u(0)=u_{0}.\label{28}\ee
From strong convergence of $\un$ in $L^{2}([0,T]\times D)$ we deduce that
\be\lim_{\nu\rightarrow 0}<(\un\cdot\nabla)\un,\phi>= 
<(u\cdot\nabla)u,\phi>\ \ \ \ \forall\phi\in V.\label{29}\ee
It follows from  \eqref{27}, \eqref{28} and  \eqref{29} that the limit $u$ 
is solution of  \eqref{5}.

The measurability of $u$ follows from using Lemma \ref{mes} with the choice of
$$X=\left\{x\in C\left([0,T];\left[H^{4}(D)\right]^{2}\cap V\right),\quad \nabla\wedge x=0 \quad on 
(0,T)\times\partial D\right\}$$
and 
$$Y=C([0,T]; H)\cap L^{2}([0,T]; V).$$
Finally, the adaptedness of the process $u$ follows from a limiting procedure of adapted processes.
  
\subsubsection{Proof of Theorem \ref{t2} (Uniqueness)}
Let us set $Q=[0,T]\times D$. Then, we have the following Lemma
\begin{lem}\label{up}
Under the assumptions of Theorem \ref{t2}, 
$\un$ and its limit $u$ are in 
$X_{p}= L^{\infty}(0,T; (W^{1,p}(D))^{2})$ ($1\leq p<\infty$). Besides 
$\un$ and $u$ satisfy the following inequality
\beq 
|u|_{X_{p}}&\leq& Cp(|\nabla\wedge u_{0}|_{L^{\infty}(D)}
+ |u_{0}|_{L^{\infty}(0,T; V)} + |\nabla\wedge f|_{L^{\infty}(Q)}\nonumber\\ 
&+& |\Delta\nabla\wedge W|_{L^{\infty}(Q)}
+ |f|_{L^{\infty}(0,T;V)}).\label{30}
\eeq
\end{lem}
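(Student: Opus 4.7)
\textbf{Proof plan for Lemma \ref{up}.}

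My strategy is to work on the vorticity level and use the change of variable from the previous proposition. By Proposition \ref{beta}, $\betan = \nabla\wedge\un$ is the generalized solution of the transport--diffusion equation with zero Dirichlet boundary condition; setting $\zn = \betan - \nabla\wedge W$ as in \eqref{23} gives a genuine parabolic problem
$$\pat_{t}\zn + (\un\cdot\nabla)\zn = \nu\Delta\zn + g_{\nu}, \qquad \zn|_{\pat D}=0,$$
with $g_{\nu} = \nabla\wedge f - (\un\cdot\nabla)(\nabla\wedge W) + \nu\Delta(\nabla\wedge W)$, and $\zn|_{t=0}=\nabla\wedge u_{0}$.

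The heart of the argument is an $L^{p}$ energy estimate on $\zn$. Multiplying by $|\zn|^{p-2}\zn$ and integrating, the diffusion term contributes $-\nu(p-1)\int |\zn|^{p-2}|\nabla\zn|^{2}\,dx\leq 0$, and the transport term vanishes because $\nabla\cdot\un=0$, $\un\cdot n=0$ on $\pat D$, and $\zn=0$ on $\pat D$; hence
$$\frac{d}{dt}|\zn(t)|_{L^{p}(D)}\leq |g_{\nu}(t)|_{L^{p}(D)}.$$
(To make this rigorous I would, as usual, first perform the computation on a smooth regularization/Galerkin approximant and pass to the limit using the $L^{2}(0,T;H_{0}^{1})$ regularity already established.) It remains to bound $|g_{\nu}|_{L^{p}}$ uniformly in $\nu$. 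The terms $\nabla\wedge f$ and $\Delta(\nabla\wedge W)$ are in $L^{\infty}(Q)$ by hypothesis, so their $L^{p}$ norms are controlled by $|D|^{1/p}$ times the $L^{\infty}$ norm. For the drift term, since $\nabla\wedge W\in H^{4}(D)\hookrightarrow W^{1,\infty}(D)$ in dimension two and $\un$ is uniformly bounded in $L^{\infty}(0,T;V)$ by Proposition \ref{u2}, the 2D Sobolev embedding $H^{1}\hookrightarrow L^{p}$ with constant of order $\sqrt{p}$ yields
$$|(\un\cdot\nabla)(\nabla\wedge W)|_{L^{p}}\leq C\sqrt{p}\,\p\un\p_{V}\,|\nabla(\nabla\wedge W)|_{L^{\infty}}.$$
Integrating the differential inequality and adding back $\nabla\wedge W\in L^{\infty}$ gives a bound on $|\betan|_{L^{\infty}(0,T;L^{p})}$ uniform in $\nu$, with the advertised dependence on the data.

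To convert a vorticity bound into a $W^{1,p}$ bound on $\un$ I would use the elliptic system \eqref{24}: $\Delta\un = -\nabla^{\bot}\betan$ with $\un\cdot n=0$ on $\pat D$ and $\nabla\cdot\un=0$. Via the stream function $\un=\nabla^{\bot}\psi_{\nu}$, $-\Delta\psi_{\nu}=\betan$ with homogeneous Dirichlet data, $L^{p}$ Calder\'on--Zygmund regularity for the Dirichlet Laplacian on the regular domain $D$ yields $|\un|_{W^{1,p}(D)}\leq C(p)(|\betan|_{L^{p}(D)}+|\un|_{L^{p}(D)})$, with $C(p)$ growing at most polynomially in $p$. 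Combined with the vorticity bound from the previous step, this gives the $L^{\infty}(0,T;W^{1,p})$ estimate \eqref{30}, uniform in $\nu$. The same bound then transfers to $u$ either by lower semicontinuity under the weak-$\ast$ convergence $\un\rightharpoonup u$ in $L^{\infty}(0,T;W^{1,p})$, or equivalently by noting that $\nabla\wedge u$ is the weak limit of $\betan$ in $L^{p}$.

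\textbf{Expected main obstacles.} The conceptually delicate point is tracking the $p$--dependence of the constants so that everything collapses to the linear-in-$p$ estimate \eqref{30}: this requires controlling the Sobolev embedding constant in two dimensions and, above all, the $L^{p}$ elliptic regularity constant for the Biot--Savart map on $D$ with the mixed boundary conditions of \eqref{24}. The second technical point is justifying the $L^{p}$ test function $|\zn|^{p-2}\zn$ for the parabolic equation \eqref{23} at the level of generalized solutions; I would handle this by approximating $\zn$ through the Faedo--Galerkin scheme already used in Proposition \ref{u} and passing to the limit in the $L^{p}$ identity.
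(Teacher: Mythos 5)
Your overall architecture (an $L^{p}$ estimate on $\zn=\betan-\nabla\wedge W$ via the parabolic problem \eqref{23}, followed by elliptic regularity for the system \eqref{24}) matches the paper's, but there is a genuine gap in the $p$-dependence, and it is not mere bookkeeping. You estimate the drift term by the two-dimensional Sobolev embedding,
$|(\un\cdot\nabla)(\nabla\wedge W)|_{L^{p}(D)}\leq C\sqrt{p}\,\p\un\p_{V}\,|\nabla\nabla\wedge W|_{L^{\infty}(D)}$,
which injects a factor $\sqrt{p}$ into your $L^{\infty}(0,T;L^{p})$ vorticity bound; the Calder\'on--Zygmund constant for \eqref{24} then contributes the additional factor $p$ (its growth as $p\to\infty$ is genuinely linear, not just ``at most polynomial''), so your route delivers $|u|_{X_{p}}\leq Cp^{3/2}(\cdots)$ rather than the linear bound \eqref{30}. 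This loss is fatal for the purpose the lemma serves: in the Yudovich iteration proving Theorem \ref{t2}, the linear growth is exactly what makes the right-hand side of \eqref{37}, namely $(Ct)^{\frac{p-2}{2}}(\frac{p}{p-2})^{\frac{p-2}{2}}(Cp)^{\frac12}$, tend to $0$ as $p\to\infty$ when $Ct<1$; with $Cp^{3/2}$ in place of $Cp$ one instead picks up a factor of the form $(Ct\sqrt{p})^{\frac{p-2}{2}}$, which blows up as $p\to\infty$ for every fixed $t>0$, and uniqueness no longer follows.

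The paper removes all $p$-dependence from the vorticity estimate \emph{before} invoking the elliptic constant, via a bootstrap that your plan is missing: (i) multiply \eqref{23} by $|\zn|^{2}\zn$ to obtain, with H\"older and Gronwall, that $\betan$ is bounded in $L^{\infty}(0,T;L^{4}(D))$ uniformly in $\nu$; (ii) feed this into the elliptic system \eqref{24} to get $\un\in L^{\infty}(0,T;W^{1,4}(D))$, hence $\un\in L^{\infty}(Q)$ by Sobolev embedding, uniformly in $\nu$; (iii) apply the parabolic \emph{maximum principle} to \eqref{23}, which yields the $p$-independent bound \eqref{32} on $|\zn|_{L^{\infty}(Q)}$, the drift contribution now being controlled by $|\un|_{L^{\infty}(Q)}|\nabla\nabla\wedge W|_{L^{\infty}(Q)}$ with no Sobolev constant at all; (iv) only then apply the Bardos-type elliptic estimate, whose constant $Cp$ acting on an $L^{\infty}$ (hence uniformly-in-$p$ $L^{p}$) vorticity is the sole source of growth in \eqref{30}. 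You flagged the $p$-tracking as the delicate point, but your mechanism cannot be repaired from within: once the $\sqrt{p}$ enters the $L^{p}$ vorticity bound it propagates to the conclusion; the cure is to upgrade the vorticity bound to $L^{\infty}(Q)$ via the maximum principle rather than to prove $L^{p}$ bounds separately for each $p$. (Your other steps --- vanishing of the transport term, justification of the test function $|\zn|^{p-2}\zn$ by Galerkin approximation, and passage from $\un$ to $u$ by weak-$\ast$ lower semicontinuity --- are sound and consistent with the paper.)
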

\begin{proof}
$\un\in L^{\infty}(0,T; V)$ (Proposition \ref{u2}). Using an embedding theorem, 
$\un$ remains bounded in $L^{\infty}(0,T; [L^{4}(D)]^{2})$.
On the other hand $\zn=\beta_{\nu}-\nabla\wedge W$ is solution of \eqref{23}, 
where $\beta_{\nu}=\nabla\wedge\un$. We multiply the first equation
of \eqref{23} by $|\zn|^{2}\zn$ and integrate over $D$ we obtain
\beq
\frac{1}{4}\frac{d}{dt}\int_{D}|\zn|^{4} &+& 
\nu\int_{D}(\nabla\zn)^{2}|\zn|^{2}
=\int_{D}(\nabla\wedge f)|\zn|^{2}\zn 
+\nu\int_{D}(\Delta\nabla\wedge W)|\zn|^{2}\zn\nonumber\\ 
&+& \int_{D}(\un\cdot\nabla)\zn|\zn|^{2}\zn
- \int_{D}(\un\cdot\nabla)(\nabla\wedge W)|\zn|^{2}\zn\label{31}
\eeq
Using H\"{o}lder's inequality for the terms in the right hand side of 
\eqref{31} and then the Gronwall's Lemma we obtain that 
\beq
& &\sup_{0<t<T}|\zn(t)|^{4}_{L^{4}(D)}\leq (|z_{0}|^{4}_{L^{4}(D)}\nonumber\\
&+& \int_{0}^{T}\left(|\nabla\wedge f|^{4}_{L^{4}(D)}
+ |\Delta\nabla\wedge W|^{4}_{L^{4}(D)}
+ |\nabla\nabla\wedge W|^{4}_{L^{\infty}(D)}|\un|^{4}_{L^{4}(D)}\right))
{\rm e}^{T}.\nonumber
\eeq

Under the hypothesis of the Lemma, it yields that 
$\betan\in L^{\infty}(0,T; L^{4}(D))$.      
Using the system \eqref{24} which is elliptic, we deduce that 
$\un\in L^{\infty}(0,T; W^{1,4}(D))$ which implies by Sobolev theorem
that $\un\in L^{\infty}(Q)$. 
Now we apply the maximum principle for $\zn$ solution of \eqref{23}; we get

\beq 
|\zn|_{L^{\infty}(Q)}&\leq& C(|\nabla\wedge u_{0}|_{L^{\infty}(D)} 
+ |\nabla\wedge f|_{L^{\infty}(Q)} \nonumber\\
& &+ |\Delta\nabla\wedge W|_{L^{\infty}(Q)}
+ |(\un\cdot\nabla)\nabla\wedge W|_{L^{\infty}(Q)}).\label{32}
\eeq 
As in \cite{bardos}, using the system \eqref{24} and in virtue of \eqref{32},
we obtain the following estimate
\beq
|\un|_{X_{p}}&\leq& Cp(|\nabla\wedge u_{0}|_{L^{\infty}(D)}
+ |\nabla\wedge f|_{L^{\infty}(Q)} + |\Delta\nabla\wedge W|_{L^{\infty}(Q)}
\nonumber\\
& &+ |(\un\cdot\nabla)\nabla\wedge W|_{L^{\infty}(Q)} 
+ |\un|_{[L^{p}(Q)]^{2}}),\label{33}
\eeq
where $C$ is a constant independent of $\nu$.
According to the hypothesis of the Lemma and following the argument of \cite{bardos},
we obtain \eqref{30} for $\un$. Passing to the limit on $\nu$,
we obtain the same estimate for $u$.
\end{proof}

\begin{lem}\label{kato}
We have
\be |v|_{L^{p}(D)}\leq Cp^{1/2}|v|_{H^{1}(D)},\ \ {\rm for}\ 2\leq p <\infty,
v\in H^{1}(D).\label{34}\ee
\end{lem}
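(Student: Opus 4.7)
The plan is to derive the inequality from the two-dimensional Trudinger--Moser exponential integrability estimate, combined with an elementary pointwise bound that converts exponentials into powers. This is precisely where the $\sqrt{p}$ factor arises: the $L^p$ norm of an $H^1$-function in 2D can be controlled by the $p$-th root of $p^{p/2}$ times an integrable exponential.

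First I would reduce to the case of functions in $H_0^1$ of a larger domain. Since $D$ is a regular bounded domain of $\mathbb{R}^2$, there exists a bounded linear extension operator $E\colon H^1(D)\to H^1(\mathbb{R}^2)$. Composing $E$ with multiplication by a fixed smooth cutoff $\chi$ supported in a ball $B\supset\overline D$ with $\chi\equiv 1$ on $D$, one obtains $\tilde v\in H_0^1(B)$ with $\tilde v|_D=v$ and $|\tilde v|_{H^1(B)}\le C_D\,|v|_{H^1(D)}$, where $C_D$ depends only on $D$ and on the cutoff.

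Next I would invoke the Trudinger--Moser inequality: there exist constants $\alpha>0$ (for instance $\alpha=4\pi$) and $K>0$, depending only on $|B|$, such that for every $w\in H_0^1(B)\setminus\{0\}$,
\[
\int_B \exp\!\left(\alpha\,\frac{w(x)^2}{|\nabla w|_{L^2(B)}^2}\right)dx\le K.
\]
Combine this with the pointwise bound $s^{k}\le k^{k}e^{-k}e^{s}$, valid for $s\ge 0$ and $k>0$ and obtained by maximizing $s\mapsto s^{k}e^{-s}$ on $[0,\infty)$ (the maximum being attained at $s=k$). Applying this with $s=\alpha\tilde v(x)^2/|\nabla\tilde v|_{L^2(B)}^2$ and $k=p/2$ yields
\[
|\tilde v(x)|^{p}\le \alpha^{-p/2}\left(\frac{p}{2}\right)^{p/2}e^{-p/2}\,|\nabla\tilde v|_{L^2(B)}^{p}\exp\!\left(\alpha\,\frac{\tilde v(x)^2}{|\nabla\tilde v|_{L^2(B)}^2}\right).
\]
Integrating over $B$, applying Trudinger--Moser, taking $p$-th roots, and using $|v|_{L^p(D)}\le |\tilde v|_{L^p(B)}$ would give
\[
|v|_{L^p(D)}\le K^{1/p}\,\alpha^{-1/2}\left(\frac{p}{2e}\right)^{1/2}|\nabla\tilde v|_{L^2(B)}\le C\sqrt{p}\,|v|_{H^1(D)},
\]
since $K^{1/p}$ stays uniformly bounded for $p\ge 2$.

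The only non-elementary ingredient is the Trudinger--Moser inequality itself; the extension step and the pointwise bound $s^k\le k^k e^{-k}e^s$ are routine. The main conceptual point, rather than a technical obstacle, is that the sharp $\sqrt{p}$ growth reflects the borderline failure of the embedding $H^1(D)\hookrightarrow L^\infty(D)$ in dimension two. An alternative route avoiding Trudinger--Moser would be to apply a two-dimensional Gagliardo--Nirenberg estimate to $w=|v|^{p/2-1}v$ and track constants iteratively; it leads to the same $\sqrt{p}$ dependence but is less transparent.
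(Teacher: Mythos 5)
Your proof is correct, and it is a genuinely different route from the paper's, which gives no argument at all for Lemma \ref{kato} and simply defers to the preprint \cite{kato}. Each step of your proposal checks out: the extension-plus-cutoff construction yields $\tilde v\in H_0^1(B)$ with $\tilde v|_D=v$ and $|\tilde v|_{H^1(B)}\le C_D|v|_{H^1(D)}$ (and the degenerate case $|\nabla\tilde v|_{L^2(B)}=0$ forces $\tilde v=0$ by Poincar\'e on $H_0^1(B)$, so the normalization inside the Trudinger--Moser inequality is harmless); the pointwise bound $s^k\le k^k e^{-k}e^s$ with $k=p/2$ is exactly what converts exponential-square integrability into $L^p$ bounds; and after integrating and taking $p$-th roots, the factor $K^{1/p}\,\alpha^{-1/2}(p/2e)^{1/2}$ is indeed $\le C\sqrt{p}$ uniformly in $p\ge 2$, since $K^{1/p}\le\max(K,1)^{1/2}$. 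By comparison, the elementary derivation usually given for this estimate (and the kind of argument behind Kato's note) tracks the constant in the Sobolev embedding $W^{1,q}(D)\subset L^p(D)$ with $q=2p/(p+2)\to 2$, or iterates a Gagliardo--Nirenberg inequality as you sketch in your closing remark; that route is more self-contained and yields the scale-invariant multiplicative form $|v|_{L^p(D)}\le C\sqrt{p}\,|v|_{L^2(D)}^{2/p}|v|_{H^1(D)}^{1-2/p}$, which is occasionally sharper in applications. Your Trudinger--Moser route costs one deeper input but buys conceptual clarity: it exhibits $\sqrt{p}$ as precisely the growth rate dictated by the borderline exponential integrability of $H^1$ functions in two dimensions, and in particular shows the rate cannot be improved. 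Either way the lemma follows, and your argument is complete as written.
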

{\bf Proof}: See \cite{kato} 

Let us assume that $u'$ and $u''$ are generalized solutions of \eqref{1} with the 
same initial data and the same external body force and set $u=u'-u''$, then 
$u$ is a generalized solution with data equal to zero i.e. it satisfies 
\be\left\{\begin{array}{lr}
          <u',\ffi> + <(u.\cdot\nabla)u,\ffi> = 0,&\forall\ffi\in V\\
           u(0) = 0.
          \end{array}
   \right.\label{35}\ee
In particular for $\ffi=u$, we have
\beq
\frac{d}{dt}|u|_{[L^{2}(D)]^{2}}^{2} &=& -2<(u.\cdot\nabla)u,u>\nonumber\\
&=&<(u.\cdot\nabla)u',\ffi>\nonumber\\
&=&-2\int_{D}(u.\cdot\nabla)u\cdot u.\nonumber
\eeq
By H\"{o}lder's inequality we have 
$$\frac{d}{dt}|u|_{[L^{2}(D)]^{2}}^{2}\leq 2|\nabla u'|_{[L^{p}(D)]^{2}}
|u|^{2}_{[L^{2p'}(D)]^{2}},\ \ 
\frac{1}{p}+\frac{1}{p'}=1.$$
Using the estimate \eqref{30}, we obtain
$$\frac{d}{dt}|u|_{[L^{2}(D)]^{2}}^{2}\leq Cp|u|^{2}_{[L^{2p'}(D)]^{2}}.$$
Now using an interpolation result we obtain
$$|u|_{[L^{2p'}(D)]^{2}}\leq |u|_{[L^{2}(D)]^{2}}^{1-\lambda}|u|^{\lambda}_{[L^{p}(D)]^{2}},\ 
\lambda=\frac{1}{p-2}\leq 1,\ 3\leq p<\infty.$$
Hence, we have
\beq
\frac{d}{dt}|u|_{[L^{2}(D)]^{2}}^{2\lambda}&=&
\lambda|u|_{[L^{2}(D)]^{2}}^{2\lambda-2}\frac{d}{dt}|u|_{[L^{2}(D)]^{2}}^{2}
\nonumber\\
&\leq&\lambda|u|_{[L^{2}(D)]^{2}}^{2\lambda-2}Cp|u|^{2}_{[L^{2p'}(D)]^{2}}
\nonumber\\
&\leq&\lambda Cp|u|_{[L^{p}(D)]^{2}}^{2\lambda}.\label{36}
\eeq
Using Lemma \ref{kato} and $\lambda=\frac{1}{p-2}$ and $u(0)=0$, then  
the integration of \eqref{36} leads to the estimate

\be |u(t)|_{[L^{2}(D)]^{2}}\leq 
(Ct)^{\frac{p-2}{2}}(\frac{p}{p-2})^{\frac{p-2}{2}}
(Cp)^{\frac{1}{2}}.\label{37}\ee
Suppose now that $t$ is so small that $Ct<1$. If we let $p\rightarrow\infty$,
then $(Ct)^{\frac{p-2}{2}}(Cp)^{\frac{1}{2}}\rightarrow 0$ while
$(\frac{p}{p-2})^{\frac{p-2}{2}}$ remains bounded. Hence $u(t)=0$ if $Ct<1$.
We repeat the argument in order to cover the whole interval $[0,T]$ so that the uniqueness is proved.
This completes the proof of Theorem \ref{t2}.

\section{The stochastic Euler equation with multiplicative noise}\label{weak}
All the results of this section can be found in \cite{B99}.

\subsection{Preliminaries}
Let $H, V$ the spaces previously defined.  Let $K$ be another separable 
Hilbert space and let $W(t)$ be a cylindrical Wiener process with values 
in $K$, defined on the stochastic basis $(\Omega, {\cal F}, \left\{{\cal F}_{t}\right\}_{t\geq 0}, P)$ 
(with the expectation $E$). 
Let $a(\cdot,\cdot)$ and $b(\cdot,\cdot,\cdot)$ the bilinear and trilinear forms previously defined.

Let us set 
$$D(A)=\left\{u\in V\cap (H^{2}(D))^{2}, 
\nabla\wedge u=0\right\},$$
and define the linear operator $A:D(A)\longrightarrow H$, as 
$<Au,v> = a(u,v)$.
We define the bilinear operator $B(u,v):V\times V\longrightarrow V'$,
as $<B(u,v),z> = b(u,v.z)$ for all $z\in V$. By the incompressibility
condition we have
$$<B(u,v),v> = 0,\ \ <B(u,v),z> = -<B(u,z),v>.$$
$B$ can be extended to a continuous operator
$$B: H\times H\longrightarrow D(A^{-\alpha})$$
for certain $\alpha > 1$.

In place of equations \eqref{SEE} we will consider the abstract stochastic 
evolution equation

\be\left\{\begin{array}{lr}
          du(t)+B(u(t),u(t))dt=f(t)dt+G(u)dW\\
          u(0)=u_{0},
          \end{array}
    \right.\label{abstract}\ee
for $t\in[0,T]$.

\subsection{Assumptions and main results}
Let assume that
$${\bf (H)} \quad u_{0}\in V \quad {\rm and} \quad f\in L^{\infty}([0,T], V),$$
and  

$$G(u)dW(x,t)=\sum_{i=1}^{\infty}C^{i}u(x,t)\frac{d\beta^{i}(t)}{dt},$$
where $\left\{\beta^{i}\right\}$ are independent Brownian motions,
$\left\{C^{i}\right\}\subset {\it L}(V,H)$ are linear operators 
satisfying for some positive real numbers $\lambda_{0}$, $\lambda_{1}$, 
$\lambda_{2}$, $C^{\infty}$ scalar fields 
$\left\{c^{i}\right\}$ and $\left\{b^{i}\right\}$ in $\bar{D}$

$${\bf (G1)}\left\{\begin{array}{l}
          C^{i}u(x,t) = c^{i}(x)u(x,t)+b^{i}(x),\\
          \sum_{i=1}^{\infty}|C^{i}u|^{2}\leq \lambda_{0}(|u|^{2}+1),\\
          \sum_{i=1}^{\infty}|\nabla\wedge(C^{i}u)|^{2}\leq
          \lambda_{1}|\nabla\wedge u|^{2} + \lambda_{2}(|u|^{2}+1).
          \end{array}
       \right.$$
For simplicity of computations, instead of {\bf (G1)}, we will consider the operator
$C^{i}u(x,t)=c^{i}(x)u(x,t)$ and the condition {\bf (G1)} becomes 
$${\bf (G1)'}\left\{\begin{array}{l}
          \sum_{i=1}^{\infty}|C^{i}u|^{2}\leq \lambda_{0}|u|^{2},\\
          \sum_{i=1}^{\infty}|\nabla\wedge(C^{i}u)|^{2}\leq
          \lambda_{1}|\nabla\wedge u|^{2} + \lambda_{2}|u|^{2}.
          \end{array}
       \right.$$
\begin{definition} Let $u_{0}\in V$. We say that there exists a martingale solution of the equation \eqref{abstract}
if there exists a stochastic basis
$(\Omega,{\cal F}, \left\{{\cal F}\right\}_{t\in [0,T]}, P)$,
a cylindrical Wiener process $W$ on the space $K$ and a progressively
measurable process $u:[0,T]\times\Omega\rightarrow H$, with $P$-a. s.
paths
$$u(.,\omega)\in C([0,T], D(A^{-\alpha}))\cap L^{\infty}(0,T; H)
\cap L^{2}(0,T; V)$$
such that $P$-a.s. the identity
\begin{eqnarray*}
<u(t),v> &+& \int_{0}^{t}<B(u(s),u(s)),v>ds = <u_{0},v> \\
&+& \int_{0}^{t}<f(s),v>ds + <\int_{0}^{t}G(u(s))dW(s),v>
\end{eqnarray*}
holds true for all $t\in[0,T]$ and all $v\in D(A^{\alpha})$.
\end{definition}
\begin{thm}\label{t3}
Under the assumption {\bf (H)} and {\bf (G1)}, there exists a martingale solution to the equation
\eqref{SEE}.
\end{thm}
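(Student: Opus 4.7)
The plan is to construct $u$ by a simultaneous Galerkin--vanishing-viscosity approximation and then apply a compactness/martingale procedure. Fix $\nu>0$, let $\{e_k\}$ be an orthonormal basis of $H$ made of eigenfunctions of the Stokes-type operator $A$, and let $P_n$ be the orthogonal projection onto $H_n=\mathrm{span}\{e_1,\dots,e_n\}$. First I would solve, on $H_n$, the finite-dimensional SDE
\begin{equation*}
  du_{n,\nu} + \bigl(\nu A u_{n,\nu} + P_n B(u_{n,\nu},u_{n,\nu})\bigr)\,dt
  = P_n f\,dt + P_n G(u_{n,\nu})\,dW.
\end{equation*}
Local Lipschitz continuity of the coefficients on $H_n$, combined with the cancellation $\langle B(u,u),u\rangle=0$ and the linear growth in (G1)$'$, yields a unique global strong progressively measurable solution.

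The next step is to produce $\nu$- and $n$-uniform a priori estimates. It\^o's formula for $|u_{n,\nu}|^2$, using the cancellation in the convection and the first bound in (G1)$'$, provides
\begin{equation*}
  E\Bigl[\sup_{t\le T}|u_{n,\nu}(t)|^2\Bigr] + 2\nu\, E\int_0^T \|u_{n,\nu}(s)\|^2\,ds \le C.
\end{equation*}
The decisive estimate mirrors Propositions \ref{beta}--\ref{u2}: write the vorticity equation for $\beta_{n,\nu}=\nabla\wedge u_{n,\nu}$ (legitimate thanks to the boundary condition built into $D(A)$), apply It\^o's formula to $|\beta_{n,\nu}|^2$, and use the second inequality in (G1)$'$ together with Gronwall to obtain
\begin{equation*}
  E\Bigl[\sup_{t\le T}|\beta_{n,\nu}(t)|^2\Bigr] \le C,
\end{equation*}
uniformly in $n,\nu$. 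The elliptic div--curl system \eqref{24} then upgrades this to a uniform bound of $u_{n,\nu}$ in $L^p(\Omega; L^\infty(0,T;V))$ for each $p\ge 1$.

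Set $\nu=\nu_n\to 0$, write $u_n:=u_{n,\nu_n}$, and turn to tightness. Split $u_n$ into a Bochner part and a stochastic-integral part: the former is bounded in $W^{1,2}(0,T; D(A^{-\alpha}))$ thanks to the continuous extension $B:H\times H\to D(A^{-\alpha})$, while a Kolmogorov-type argument on the stochastic integral yields a bound in $W^{\gamma,p}(0,T;H)$ for some $\gamma\in(0,1/2)$. Combined with the $L^\infty(0,T;V)$ bound and the compact inclusions $V\subset H\subset D(A^{-\alpha})$, a Simon-type lemma gives tightness of the laws $\mathcal L(u_n)$ on
\begin{equation*}
  \mathcal X := L^2(0,T;H)\cap C\bigl([0,T];D(A^{-\alpha})\bigr).
\end{equation*}
Skorokhod's representation theorem then produces a new stochastic basis carrying $(\tilde u_n,\tilde W_n)$ with the same laws as $(u_n,W)$ and a limit $(\tilde u,\tilde W)$ with $\tilde u_n\to\tilde u$ almost surely in $\mathcal X$.

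The final step is to pass to the limit in the weak formulation on the new basis. Linear and forcing terms converge by weak/strong continuity, and the stochastic integral is identified by the classical martingale characterization together with a Vitali-type argument powered by the uniform moment bounds from (G1)$'$. The main obstacle is the nonlinearity $B(\tilde u_n,\tilde u_n)$, which loses a derivative whereas compactness only delivers strong convergence in $L^2(0,T;H)$. The remedy, genuinely two-dimensional, is the almost-sure $L^\infty(0,T;V)$ bound inherited from the vorticity estimate: testing against any $v\in D(A^\alpha)$ and using $B:V\times H\to D(A^{-\alpha})$ with one factor bounded in $V$ and the other converging strongly in $L^2(0,T;H)$ is enough to identify the limit as $B(\tilde u,\tilde u)$. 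The viscous term vanishes because $\|\nu_n A u_n\|_{L^2(0,T;V')}^2\le\nu_n\cdot\nu_n\int_0^T\|u_n\|^2\,ds\to 0$, and the limit therefore satisfies the required weak identity $\tilde P$-almost surely, proving the existence of a martingale solution.
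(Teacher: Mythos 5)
Your overall architecture is exactly the paper's: a viscous approximation of Euler, It\^o estimates for $E\sup_t|u|^p$, a vorticity estimate combined with the elliptic div--curl system \eqref{24} to get a $\nu$-uniform $V$-bound, a decomposition of the solution whose stochastic-integral part is bounded in $W^{\gamma,2}(0,T;H)$ (Lemma \ref{integral}) and whose Bochner parts are bounded in $W^{1,2}$ of negative spaces, tightness via Theorems \ref{compact1} and \ref{compact2} in $L^{2}(0,T;H)\cap C([0,T];D(A^{-\sigma/2}))$, then Prokhorov--Skorokhod and a martingale representation theorem. The one structural difference is where you place the Galerkin projection: the paper does \emph{not} run a Galerkin scheme itself; it quotes \cite{CM} for a strong $V$-valued solution of the Navier--Stokes system \eqref{A} at fixed $\nu$, and only then applies the curl to the genuine PDE, so that $\xi_\nu=\nabla\wedge u_\nu$ satisfies a true transport--diffusion SPDE to which It\^o's formula and the cancellation $\langle (u\cdot\nabla)\xi,\xi\rangle=0$ apply.

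This is where your proposal has a genuine gap. For the projected system the nonlinear term is $P_nB(u_{n,\nu},u_{n,\nu})$, and $\nabla\wedge$ does not commute with $P_n$: after curling, the nonlinearity is $\nabla\wedge P_nB(u_{n,\nu},u_{n,\nu})$, which is \emph{not} $(u_{n,\nu}\cdot\nabla)\beta_{n,\nu}$, so the transport cancellation that drives your Gronwall argument is simply unavailable. The obstruction is not the boundary condition built into $D(A)$, as you suggest, but the projection itself. The standard repair is to work with the eigenfunction basis of $A$ (so $P_nA=AP_n$) and use the identity relating $\langle\nabla\wedge v,\nabla\wedge w\rangle$ to $a(v,w)$ under the condition $\nabla\wedge u=0$ on $\partial D$, which converts the vorticity estimate into an estimate on $b(u_{n,\nu},u_{n,\nu},Au_{n,\nu})$; this trilinear term does not vanish and must be controlled using the special structure of the slip boundary condition together with curvature boundary terms --- precisely the technical content the paper delegates to \cite{CM}. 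As written, your ``legitimate thanks to the boundary condition'' step hides the hardest point of the whole proof. Two smaller remarks: to claim $u_{n,\nu}$ bounded in $L^{p}(\Omega;L^{\infty}(0,T;V))$ for every $p$ you need the It\^o computation at the level of $|\beta_{n,\nu}|^{p}$, not just $p=2$ (this is what the paper's Lemmas with $E\sup_t|u_\nu|^{p}$ and $E\sup_t\|u_\nu\|^{p}$ provide); and the tightness space should be $C([0,T];D(A^{-\sigma/2}))$ with $\sigma>\alpha$, since Theorem \ref{compact2} requires the compact embedding one step below the space in which the fractional Sobolev bound holds.
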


\subsection{Proof of Theorem \ref{t3}}
The proof of Theorem \ref{t3} will be achieved following two steps. First, for a fixed $\nu>0$,
we introduce an approximating system (the modified Navier-Stokes system). Then, through some uniform estimates in $\nu$, we pass to the limit getting a weak solution in a probabilistic sense. This is achieved by means of the Prokhorov and Skorohod Theorems followed by a representation theorem for martingales.    

\subsubsection{Navier-Stokes equations and a priori estimates}
Let us consider for $\nu>0$ the system
\be\left\{\begin{array}{lr}
          du(t)+\nu Au+ B(u(t),u(t))dt=f(t)dt+G(u)dW\\
          u(0)=u_{0},
          \end{array}
    \right.\label{A}\ee
for $t\in[0,T]$.
Under the assumptions {\bf (H)} and {\bf (G1)}, system \eqref{A} has a strong solution 
$\un\in L^{2}(\Omega; C([0,T]; V))$ see \cite{CM}

\begin{lem}\label{up}
There exists a positive constant $C(p)$ independent of $\nu$ such that for each $p\geq 2$  
\be E(\sup_{0\leq s\leq t}|\un(s)|^{p}) \leq C(p),\label{up}\ee
\end{lem}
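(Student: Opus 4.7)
The plan is to apply Itô's formula to $|u_\nu(s)|^p$ and then use the Burkholder--Davis--Gundy inequality together with Gronwall's lemma. Since $u_\nu$ is a strong solution of \eqref{A} with paths in $C([0,T];V)$, all quantities involved are well-defined.

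First I would apply Itô's formula to the function $x\mapsto |x|^2$ to get an expression for $|u_\nu(t)|^2$, using that $\langle B(u_\nu,u_\nu),u_\nu\rangle=0$, so that the nonlinear term drops out. The resulting equation reads
\begin{eqnarray*}
|u_\nu(t)|^2 &+& 2\nu\int_0^t\|u_\nu(s)\|^2\,ds \;=\; |u_0|^2 + 2\int_0^t\langle f(s),u_\nu(s)\rangle\,ds \\
&+& \int_0^t\sum_i|C^iu_\nu(s)|^2\,ds + 2\int_0^t\langle u_\nu(s),G(u_\nu(s))\,dW(s)\rangle.
\end{eqnarray*}
Then I would apply Itô's formula again to $\varphi(x)=x^{p/2}$ with $x=|u_\nu|^2$, which produces an additional quadratic variation term of the form $\frac{p(p-2)}{2}|u_\nu|^{p-4}\sum_i\langle u_\nu,C^iu_\nu\rangle^2$. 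Dropping the non-negative viscous term $2\nu\|u_\nu\|^2$, bounding the forcing term via Young's inequality $p|u_\nu|^{p-2}\langle f,u_\nu\rangle \leq C(p)(|f|^p + |u_\nu|^p)$, and applying assumption \textbf{(G1)'} together with Cauchy--Schwarz to get $\sum_i\langle u_\nu,C^iu_\nu\rangle^2\leq\lambda_0|u_\nu|^4$, all deterministic contributions to $d|u_\nu|^p$ can be bounded by $C(p)(1+|f|^p+|u_\nu|^p)$ up to the stochastic integral.

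The main point is then to control the martingale term $M(t):=p\int_0^t|u_\nu(s)|^{p-2}\langle u_\nu(s),G(u_\nu(s))\,dW(s)\rangle$ after taking supremum and expectation. By Burkholder--Davis--Gundy and \textbf{(G1)'},
\begin{eqnarray*}
E\sup_{0\leq s\leq t}|M(s)| &\leq& C\,E\left(\int_0^t p^2|u_\nu|^{2p-2}\sum_i|C^iu_\nu|^2\,ds\right)^{1/2} \\
&\leq& C(p)\,E\left(\sup_{0\leq s\leq t}|u_\nu(s)|^p\int_0^t|u_\nu(s)|^p\,ds\right)^{1/2},
\end{eqnarray*}
and a standard $2ab\leq\epsilon a^2+\epsilon^{-1}b^2$ argument lets me absorb $\tfrac12 E\sup_{s\leq t}|u_\nu(s)|^p$ into the left-hand side. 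The remaining inequality
$$E\sup_{0\leq s\leq t}|u_\nu(s)|^p \leq C(p)\left(|u_0|^p + \|f\|_{L^\infty(0,T;V)}^p\,T + \int_0^t E\sup_{0\leq r\leq s}|u_\nu(r)|^p\,ds\right)$$
is then closed by Gronwall's lemma, yielding the bound $C(p)$ independent of $\nu$.

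The main obstacle is the subtle use of BDG combined with the absorption step: one must organize the Itô formula so that the stochastic integrand is controlled by $|u_\nu|^{p}$ (not a larger power) in the quadratic variation, which is precisely what hypothesis \textbf{(G1)'} guarantees; without the linear-growth structure of $G$, the absorption into $\sup|u_\nu|^p$ would fail and Gronwall would not close.
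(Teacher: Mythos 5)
Your overall strategy coincides with the paper's proof: It\^o's formula for $|u_\nu|^p$ (your two-step application, first to $|x|^2$ and then to $x^{p/2}$, is simply a more careful way of doing the same computation), cancellation of the nonlinearity via $\langle B(u_\nu,u_\nu),u_\nu\rangle=0$, Young's inequality for the forcing term, hypothesis \textbf{(G1)'} for the It\^o correction, Burkholder--Davis--Gundy plus the $2ab\le\epsilon a^{2}+\epsilon^{-1}b^{2}$ absorption for the martingale term (a step the paper leaves implicit before invoking Gronwall, so you actually spell out more detail there), and Gronwall's lemma to close. Your remark that \textbf{(G1)'} yields the homogeneous bound $\sum_i\langle u_\nu,C^iu_\nu\rangle^{2}\le\lambda_0|u_\nu|^{4}$, without which the absorption into $\sup|u_\nu|^p$ would fail, is also correct and is exactly the role this hypothesis plays in the paper.

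There is, however, one setting-specific step that fails as written: ``dropping the non-negative viscous term $2\nu\int_0^t\|u_\nu(s)\|^{2}\,ds$.'' In this paper $A$ is not the standard Stokes operator with Dirichlet conditions: it is defined through the bilinear form $a(u,v)=\int_D\nabla u\cdot\nabla v-\int_{\partial D}k\,u\cdot v$, where $k$ is the curvature of $\partial D$, reflecting the slip boundary conditions $u\cdot n=0$, $\nabla\wedge u=0$. Consequently the viscous contribution to the It\^o drift is $-\nu p|u_\nu|^{p-2}a(u_\nu,u_\nu)$ with $a(u,u)=|\nabla u|^{2}-\int_{\partial D}k|u|^{2}$, which has no definite sign and cannot simply be discarded. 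The paper deals with this via the trace-type inequality $\int_{\partial D}k|u|^{2}\le\epsilon|\nabla u|^{2}+C(\epsilon)|u|^{2}$: one retains the dissipation $\nu p(1-\epsilon)|u_\nu|^{p-2}|\nabla u_\nu|^{2}$ on the left-hand side and adds the lower-order term $\nu pC(\epsilon)|u_\nu|^{p}$ to the Gronwall budget; since $\nu$ ranges over a bounded set (the relevant limit is $\nu\to0$), the resulting constant stays independent of $\nu$. With this one-line correction your argument coincides with the paper's proof; everything else is sound.
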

\begin{proof}
By It\^{o} formula, for $p\geq 2$ we have

\beq
d|\un(t)|^{p}\leq &p&|\un(t)|^{p-2}<\un,d\un>\nonumber\\
&+& (1/2)p(p-1)|\un(t)|^{p-2}\sum_{i=1}^{\infty}|C^{i}\un|^{2}dt.
\nonumber\eeq
Since $<B(\un,\un),\un>=0$ and using the hypothesis (G1)' we have

\beq
d|\un(t)|^{p}&+&\nu p|\un(t)|^{p-2}|\nabla\un|^{2}\leq
\nu p|\un(t)|^{p-2}(\int_{\pat D}k|\un|^{2})dt\nonumber\\
&+& p|\un(t)|^{p-2}<f,\un>dt
+ (1/2)\lambda_{0}p(p-1)|\un(t)|^{p}dt\nonumber\\ 
&+& p|\un(t)|^{p-2}\sum_{i=1}^{\infty}<C^{i}\un,\un>d\beta^{i}(t).\nonumber
\eeq
On the other hand, for an arbitrary $\ep>0$
$$\int_{\pat D}|\un|^{2}\leq \ep|\nabla\un|^{2} + C(\ep)|\un|^{2},$$
and by H\"{o}der inequality, for an arbitrary $\epa>0$
\beq
|\un(t)|^{p-2}<f,\un>&\leq&|\un(t)|^{p-2}|f||\un|\nonumber\\
&\leq&|\un(t)|^{p-2}(\frac{1}{2}|f|^{2}+\frac{1}{2}|\un|^{2})\nonumber\\
&\leq&\frac{1}{2}|\un(t)|^{p} + \frac{1}{2}|\un(t)|^{p-2}|f|^{2}\nonumber\\
\leq\frac{1}{2}\left(1+\epa(p-2)/p\right)|\un(t)|^{p} 
&+& \frac{1}{p\epa^{(p-2)/2}}|f|^{p}.\nonumber
\eeq
Thus,
\beq
d|\un(t)|^{p}&+&\nu p(1-\ep)|\un(t)|^{p-2}|\nabla\un|^{2}dt\leq
\frac{1}{p\epa^{\frac{(p-2)}{2}}}|f|^{p}\nonumber\\
&+&\left(\nu pC_{\ep}+\frac{p}{2}\left(1+\frac{\epa(p-2)}{p}\right)\right)
|\un(t)|^{p}dt\nonumber\\
&+& p|\un(t)|^{p-2}\sum_{i=1}^{\infty}<C^{i}\un,\un>d\beta^{i}(t).\nonumber
\eeq
Now we integrate between  $0$ and $t$ and take the supremum on $t$ and integrate 
over $\Omega$, we obtain
 
\begin{equation*}
\begin{split}
 & E(\sup_{0\leq s\leq t}|\un(t)|^{p})\leq
C\int_{0}^{t}E(\sup_{0\leq s\leq r}|\un(s)|^{p})dr\\
& + C \int_{0}^{t}E|f|^{p}ds
+ \sum_{i=1}^{\infty}pE\left(\sup_{0\leq s\leq t}\int_{0}^{s}
|\un(r)|^{p-2}<C^{i}\un,\un>d\beta^{i}(r)\right).
\end{split}
\end{equation*}

By Burkh\"{o}lder-Davis-Gundy inequality, for some constant $C$ we have
\beq
& &\sum_{i=1}^{\infty}E\left(\sup_{0\leq s\leq t}\int_{0}^{s}|\un(r)|^{p-2}
<C^{i}\un,\un>d\beta^{i}(r)\right)\leq\nonumber\\
& &CE\left(\left(\int_{0}^{t}
|\un(r)|^{2p-2}\sum_{i=1}^{\infty}|C^{i}\un|^{2}dr\right)^{1/2}\right)\leq
\nonumber\\
& &CE\left(\sup_{0\leq s\leq t}|\un(s)|^{p/2}\left(\int_{0}^{t}
\lambda_{0}|\un(r)|^{p}dr\right)^{1/2}\right).\nonumber
\eeq
By Gronwall Lemma we get \eqref{up}. 
\end{proof}

\begin{lem}
There exists a positive constant $C$ independent of  $\nu$ such that
\be E\left(\sup_{0\leq s\leq t}\p\un(s)\p^{p}\right)\leq C,\label{upp}\ee
\end{lem}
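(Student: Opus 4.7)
The plan is to establish a uniform (in $\nu$) $L^{p}(\Omega;L^{\infty}(0,T;L^{2}(D)))$-bound on the vorticity $\betan:=\nabla\wedge\un$, and then deduce \eqref{upp} from the elliptic estimate \eqref{26} combined with Lemma \ref{up}. The first step is to take the curl of \eqref{A}: using $\nabla\wedge B(\un,\un)=(\un\cdot\nabla)\betan$, the boundary condition $\nabla\wedge\un=0$ on $\pat D$ preserved by the scheme, and the structure of (G1)$'$, one obtains the vorticity equation
\[
d\betan - \nu\Delta\betan\,dt + (\un\cdot\nabla)\betan\,dt \;=\; (\nabla\wedge f)\,dt + \sum_{i=1}^{\infty}\nabla\wedge(C^{i}\un)\,d\beta^{i},
\]
with Dirichlet boundary condition $\betan|_{\pat D}=0$.

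Next I would apply It\^o's formula to $|\betan(t)|^{p}$ (in $L^{2}(D)$), after first writing the equation along a Galerkin approximation in a basis of $H^{1}_{0}(D)$. The transport term $\langle(\un\cdot\nabla)\betan,|\betan|^{p-2}\betan\rangle$ vanishes because $\un$ is divergence-free and tangent to $\pat D$; the Laplacian yields a nonnegative viscous dissipation, with no boundary contribution since $\betan=0$ on $\pat D$; and by hypothesis (G1)$'$ the It\^o correction is dominated by
\[
\tfrac{p(p-1)}{2}|\betan|^{p-2}\sum_{i}|\nabla\wedge(C^{i}\un)|^{2} \;\le\; C|\betan|^{p-2}\bigl(|\betan|^{2}+|\un|^{2}\bigr) \;\le\; C\,|\betan|^{p} + C\,|\un|^{p}
\]
by Young's inequality. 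The forcing term $p|\betan|^{p-2}\langle\nabla\wedge f,\betan\rangle$ is handled analogously using $f\in L^{\infty}(0,T;V)$, so that $\nabla\wedge f\in L^{\infty}(0,T;L^{2}(D))$.

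After integrating in time, taking the supremum over $[0,t]$ and the expectation, and applying the Burkh\"older-Davis-Gundy inequality to the remaining martingale as in the proof of Lemma \ref{up}, one absorbs $\tfrac{1}{2}E\sup_{s\leq t}|\betan(s)|^{p}$ into the left-hand side. Coupled with the uniform bound $E\sup_{s\leq t}|\un(s)|^{p}\leq C(p)$ from Lemma \ref{up}, Gronwall's inequality gives $E\sup_{s\leq t}|\betan(s)|^{p}\leq C$ with $C$ independent of $\nu$. The elliptic inequality \eqref{26} then yields $\p\un\p^{p}\leq C(|\betan|^{p}+|\un|^{p})$, from which \eqref{upp} follows.

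The main obstacle is the rigorous justification of It\^o's formula for the vorticity equation, which is a PDE rather than an abstract stochastic evolution equation in a single Hilbert space with the required regularity: a Galerkin scheme adapted to the Dirichlet condition for $\betan$, together with a passage to the limit relying on lower semicontinuity of the viscous term and the independent uniform estimates on $\un$, is needed. The second delicate point is exactly what hypothesis (G1)$'$ is tailored for: the bound $\sum_{i}|\nabla\wedge(C^{i}\un)|^{2}\leq\lambda_{1}|\betan|^{2}+\lambda_{2}|\un|^{2}$ is precisely what allows Gronwall to close without producing constants that blow up as $\nu\to 0$.
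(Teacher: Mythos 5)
Your proposal is correct and is essentially the paper's own argument: the paper likewise applies the curl to \eqref{A} to obtain the vorticity equation \eqref{8} and then concludes by ``similar computations using the It\^o formula'' (i.e.\ the BDG--Gronwall scheme of Lemma \ref{up}) combined with the elliptic system \eqref{24}, exactly as you do via \eqref{26}. Your write-up merely fills in the details the paper leaves implicit --- the vanishing of the transport term $\int_{D}(\un\cdot\nabla)\betan\,\betan$, the role of (G1)$'$ in keeping the It\^o correction $\nu$-independent, and the Galerkin justification of It\^o's formula for the vorticity equation.
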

\begin{proof}
Since $\xn=\nabla\wedge\un$.
We apply the {\rm curl} to the equation \eqref{A},  we get for $t\in [0,T]$
\be d\xn + \nu A\xn dt + \nabla\wedge B(\un,\un) dt= 
\nabla\wedge f dt +  \sum_{i=1}^{\infty}\nabla\wedge 
(C^{i}\un)d\beta^{i}(t)\label{8}.\ee

Now, similar computations using the It\^o formula and then the elliptic system \eqref{24} yields the 
required estimate.
\end{proof}

\subsubsection{Tightness and the limit problem }

\begin{prp}
The family $\left\{{\cal L}(\un)\right\}_{\nu}$ is tight in $L^{2}(0,T; H)
\cap C([0,T]; D(A^{-\sigma/2}))$, for some $\sigma>1$.
\end{prp}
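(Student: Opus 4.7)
The plan is to prove tightness by exhibiting a compact set in $L^2(0,T;H)\cap C([0,T];D(A^{-\sigma/2}))$ which carries most of the mass of $u_\nu$ uniformly in $\nu$. The natural compactness tool is a fractional Aubin-Lions-Simon type lemma: if a set is bounded in $L^2(0,T;V)\cap W^{\gamma,p}(0,T;D(A^{-\sigma/2}))$ for some $\gamma>1/p$, then (using $V\subset\subset H\subset\subset D(A^{-\sigma/2})$ once $\sigma$ is large enough) it is relatively compact in both $L^2(0,T;H)$ and $C([0,T];D(A^{-\sigma/2}))$. So the whole problem reduces to controlling these two norms of $u_\nu$ in probability, uniformly in $\nu$.

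First I would handle the spatial bound. From Lemma \ref{up} and estimate \eqref{upp} we already have
\[
E\Bigl(\sup_{0\leq s\leq T}\|u_\nu(s)\|^p\Bigr)\leq C,
\]
so $u_\nu$ is bounded in $L^p(\Omega;L^\infty(0,T;V))$ and in particular in $L^p(\Omega;L^2(0,T;V))$, giving the first ingredient with constants independent of $\nu$.

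Next I would split the equation \eqref{A} into its deterministic and stochastic parts,
\[
u_\nu(t)=u_0+\int_0^t\bigl(-\nu A u_\nu(s)-B(u_\nu(s),u_\nu(s))+f(s)\bigr)ds+\int_0^t G(u_\nu(s))dW(s):=J^1_\nu(t)+J^2_\nu(t),
\]
and estimate each in a sufficiently negative Hilbert-scale space. Choose $\sigma>1$ large enough that $B:H\times H\to D(A^{-\sigma/2})$ is continuous (which is ensured by the extension property of $B$ recalled in the paper with $\alpha>1$). Then $\|B(u_\nu,u_\nu)\|_{D(A^{-\sigma/2})}\leq C|u_\nu|^2$, while $\|\nu Au_\nu\|_{D(A^{-\sigma/2})}\leq C\nu\|u_\nu\|$ and $f\in L^\infty(0,T;V)\subset L^2(0,T;D(A^{-\sigma/2}))$; using the a priori bounds already established, this yields
\[
E\|J^1_\nu\|_{W^{1,2}(0,T;D(A^{-\sigma/2}))}^2\leq C.
\]
For the stochastic convolution I would apply the standard Burkholder-Davis-Gundy estimate in fractional Sobolev norms: by condition (G1)$'$, $G(u_\nu)$ satisfies $\sum_i\|C^iu_\nu\|^2\leq\lambda_0|u_\nu|^2$, so
\[
E\|J^2_\nu\|_{W^{\gamma,p}(0,T;H)}^p\leq C_p E\int_0^T\Bigl(\sum_i|C^iu_\nu(s)|^2\Bigr)^{p/2}ds\leq C'_p,
\]
for any $p\geq 2$ and $\gamma<1/2$. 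Picking $p$ and $\gamma$ with $\gamma>1/p$ (say $p=4$, $\gamma=1/3$) gives $u_\nu$ bounded in $L^p(\Omega;W^{\gamma,p}(0,T;D(A^{-\sigma/2})))$.

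Finally, for $R>0$ define
\[
K_R=\bigl\{v:\|v\|_{L^2(0,T;V)}+\|v\|_{W^{\gamma,p}(0,T;D(A^{-\sigma/2}))}\leq R\bigr\},
\]
which by the compactness lemma is relatively compact in $L^2(0,T;H)\cap C([0,T];D(A^{-\sigma/2}))$. By Chebyshev and the uniform moment bounds,
\[
P(u_\nu\notin K_R)\leq\frac{C}{R^2}+\frac{C'_p}{R^p}
\]
uniformly in $\nu$, which can be made arbitrarily small by choosing $R$ large, proving tightness. The main obstacle is the proper choice of $\sigma$ together with the fractional-in-time estimate for the stochastic integral: one must verify that the exponent $\gamma$ satisfies both $\gamma<1/2$ (so BDG applies) and $\gamma>1/p$ (so the Sobolev embedding $W^{\gamma,p}\hookrightarrow C$ holds for the uniform time regularity needed for tightness in $C([0,T];D(A^{-\sigma/2}))$), a balance that is possible only because the moments in Lemmas \ref{up} and \eqref{upp} are available for all $p\geq 2$.
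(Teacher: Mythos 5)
Your proposal is correct and follows essentially the same route as the paper: decompose the integral form of \eqref{A}, bound the drift terms in $W^{1,2}$ of a negative space $D(A^{-\alpha/2})$ (using the extension property of $B$ and the uniform moment estimates \eqref{up}, \eqref{upp}), bound the stochastic integral in a fractional Sobolev space with $\gamma<1/2$ via Lemma \ref{integral}, and conclude with the compactness results (Theorems \ref{compact1} and \ref{compact2}, whose condition $\gamma p>1$ is exactly your balance $1/p<\gamma<1/2$) together with a Chebyshev bound on compact sets $K_{R}$, which the paper leaves implicit in the phrase ``bounded in probability.'' The only bookkeeping slip is that your set $K_{R}$, bounded in $W^{\gamma,p}(0,T;D(A^{-\sigma/2}))$, cannot be relatively compact in $C([0,T];D(A^{-\sigma/2}))$ with the \emph{same} $\sigma$ (a space is not compactly embedded in itself): as in the paper, one takes the fractional bound in $D(A^{-\alpha/2})$ and concludes tightness in $C([0,T];D(A^{-\sigma/2}))$ for $\sigma>\alpha$, a harmless fix since the statement only requires some $\sigma>1$.
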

\begin{proof}
We decompose $\un$ as 
\beq
\un(t)&=&P_{n}u_{0}-\nu\int_{0}^{t}A\un(s)
-\int_{0}^{t}B(\un(s),\un(s))\nonumber\\
& &+\int_{0}^{t}f(s) 
+\sum_{i=1}^{\infty}\int_{0}^{t}C^{i}\un(s)d\beta^{i}(s)\nonumber\\
&=& J_{1}+...+J_{5}.\label{12}
\eeq
In the sequel, $C$ denotes and arbitrary positive constant independent of $\nu$. 
We have 
$$E|J_{1}|^{2}\leq C.$$

From \eqref{upp} 
$$E\p J_{2}\p^{2}_{W^{1,2}(0,T;V')}\leq C,$$
$$E\p J_{4}\p^{2}_{W^{1,2}(0,T;V')}\leq C.$$ 

Using Lemma \ref{integral},  assumption (G1)' and 
the estimate \eqref{up} we have
$$E\p J_{5}\p^{2}_{W^{\gamma,2}(0,T;H)}\leq C$$
for $\gamma\in(0,1/2)$.

Since $\alpha>1$, $D(A^{\alpha/2})\subset (L^{\infty}(D))^{2}$ so that
$$|<B(u,u),v>|\leq C|u|\p u\p|A^{\alpha/2}v|,\ u\in V,
\ v\in D(A^{\alpha/2})$$
for some constant $C>0$. Hence, we have
$$\p J_{3}\p^{2}_{W^{1,2}(0,T;D(A^{-\alpha/2}))}\leq C
\sup_{0\leq t\leq T}|\un(t)|^{2}\int_{0}^{T}\p\un(s)\p^{2}ds.$$

In virtue of \eqref{up} and \eqref{upp}, we obtain that
$$E\p J_{3}\p^{2}_{W^{1,2}(0,T;D(A^{-\alpha/2}))}\leq C.$$
Clearly for $\gamma\in(0,1/2)$, 
$W^{1,2}(0,T;D(A^{-\alpha/2}))\subset W^{\gamma,2}(0,T;D(A^{-\alpha/2}))$.
Collecting all the previous inequalities we get that
\be E\p\un\p_{W^{\gamma,2}(0,T;D(A^{-\alpha/2}))}\leq C,\label{usob}\ee
for $\gamma\in(0,1/2)$ and $\alpha>1$.

By \eqref{upp} and \eqref{usob}, we have that the laws  of $\un$ denoted by ${\cal L}(\un)$ 
are bounded in probability in 
$$L^{2}(0,T;V)\cap W^{\gamma,2}(0,T;D(A^{-\alpha/2})).$$
Using Lemmas \ref{compact1} and \ref{compact2}, we deduce that 
$\left\{{\cal L}(\un)\right\}$ is tight in 
$L^{2}(0,T;H)\cap C([0,T];D(A^{-\sigma/2}))$ for $\sigma>\alpha$.
\end{proof}

We conclude the existence of martingale solutions for system \eqref{abstract} 
by using the Skorohod theorem and a representation theorem for martingales.

\section{Stochastic Euler equation with multiplicative noise in Banach spaces}\label{BP}

The results of this section are due to \cite{BP01} where the techniques used are similar to 
Section \ref{weak}. For simplicity, we will assume that $f=0$.
Let us assume through this section that 
$G$ is a  continuous mapping 
from $H^{1,2}\cap H^{1,q}$ into $L_{2}(K; W^{1,2})\cap R(K; W^{1,q})$ such that

\begin{equation}\label{g1}
\|G(u)\|_{L_{2}(K; W^{1,2})}\leq C(1+\|u\|)
\end{equation}
and 
\begin{equation}\label{gq}
\|G(u)\|_{R(K; W^{1,q})}\leq C(1+\|u\|_{H^{1,q}}).
\end{equation}

\begin{definition}
Assume that $u_{0}\in H^{1,2}\cap H^{1,q}\quad {\rm for}\quad q\in[2,\infty)$ and that $G$ is a continuous mapping from $H^{1,2}\cap H^{1,q}$ into $L_{2}(K; W^{1,2})\cap R(K; W^{1,q})$.

A martingale $H^{1,2}\cap H^{1,q}$-valued solution to the stochastic Euler equation \eqref{SEE} 
is a triple consisting of a filtered probability space $(\Omega,{\cal F}, \left\{{\cal F}\right\}_{t\in [0,T]}, P)$, and 
${\cal F}_{t}$-adapted cylindrical Wiener process $W(t), t\geq 0$ on $K$ and 
an ${\cal F}_{t}$-adapted measurable  $H^{1,2}\cap H^{1,q}$-valued process $u(t), t\geq 0$ such that

\begin{enumerate}
\item for every $p\in [1,\infty),\quad u\in L^{p}(\Omega; L^{\infty}(0,T;  H^{1,2}\cap H^{1,q}))$,
\item for all $\phi\in D(A)$ and $t\in [0,T]$, one has a.s.
\begin{equation*}
<u(t),\phi>=<u_{0},\phi>+\int_{0}^{t}<u(s)\cdot\nabla\phi, u(s)>ds+\int_{0}^{t}<G(u(s))dW,\phi>.
\end{equation*}
\end{enumerate}
\end{definition}

\begin{thm} Let $q\in [2,\infty)$ and assume that the mapping $G$ defined previously satisfies the assumption \eqref{g1} and \eqref{gq}. Then for any $u_{0}\in H^{1,2}\cap H^{1,q}$ there exists a 
martingale $H^{1,2}\cap H^{1,q}$-valued solution to the system \eqref{SEE}.
\end{thm}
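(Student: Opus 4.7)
My plan is to follow the scheme of Section \ref{weak} for Theorem \ref{t3} almost verbatim, upgrading each a priori estimate to capture the additional $H^{1,q}$ regularity dictated by the $\gamma$-radonifying assumption \eqref{gq}. First, for each $\nu>0$ I introduce the viscous approximation
$$du_\nu + \nu A u_\nu\,dt + B(u_\nu,u_\nu)\,dt = G(u_\nu)\,dW, \qquad u_\nu(0) = u_0,$$
whose strong $H^{1,2}\cap H^{1,q}$-valued solution can be obtained by a Galerkin method combined with standard Navier-Stokes machinery (using that for fixed $\nu$ the dissipation controls the nonlinearity, together with \eqref{g1}-\eqref{gq}).

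Next I derive two families of uniform-in-$\nu$ bounds. The $H$ and $H^{1,2}$ bounds
$$E\sup_{0\leq t\leq T}|u_\nu(t)|^p \leq C(p), \qquad E\sup_{0\leq t\leq T}\|u_\nu(t)\|^p \leq C(p)$$
follow exactly as in Lemma \ref{up} and its successor: apply the It\^o formula to $|u_\nu|^p$ and then, via the vorticity equation obtained by taking the curl of the momentum equation, to $|\nabla\wedge u_\nu|^p$, using \eqref{g1} and the Burkholder-Davis-Gundy inequality. The essential new estimate is
$$E\sup_{0\leq t\leq T}\|u_\nu(t)\|_{H^{1,q}}^p \leq C(p,q),$$
obtained by applying an It\^o-type inequality to $\int_D |\xi_\nu|^q\,dx$ with $\xi_\nu=\nabla\wedge u_\nu$. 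The convective term has zero $L^q$-pairing with $|\xi_\nu|^{q-2}\xi_\nu$ thanks to incompressibility and the slip boundary condition; the viscous term is dissipative; the noise is controlled precisely by the $\gamma$-radonifying norm in \eqref{gq}, which plays the role the Hilbert-Schmidt norm played for $q=2$. The elliptic system \eqref{24} then promotes this $L^q$-vorticity estimate to a full $H^{1,q}$-estimate on $u_\nu$. Fractional time regularity in $W^{\gamma,2}(0,T;D(A^{-\alpha/2}))$ for small $\gamma>0$ and some $\alpha>1$ follows by splitting $u_\nu$ as in \eqref{12} and bounding each piece by these estimates.

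From this point the argument parallels Section \ref{weak}. The above bounds, together with the Aubin-Lions and Simon type compactness lemmas already invoked, ensure that the laws $\{{\cal L}(u_\nu)\}$ are tight on the Polish space $L^2(0,T;H)\cap C([0,T];D(A^{-\sigma/2}))$ for some $\sigma>\alpha>1$. By Skorohod's representation theorem there is, on a new stochastic basis, an a.s. convergent subsequence; the nonlinear term passes to the limit via strong $L^2(0,T;H)$ convergence, while the vanishing viscosity term $\nu A u_\nu$ disappears thanks to the uniform $H^{1,2}$-bound. A standard martingale representation theorem produces the driving cylindrical Wiener process, and weak-$*$ lower semicontinuity of the $L^\infty(0,T;H^{1,2}\cap H^{1,q})$-norm transfers the required integrability to the limit, yielding a martingale $H^{1,2}\cap H^{1,q}$-valued solution.

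The principal obstacle is the $H^{1,q}$ estimate: since $L^q(D)$ is not Hilbert when $q\neq 2$, the classical It\^o formula is unavailable in the limit and one must work at the level of finite-dimensional Galerkin projections of the vorticity equation, applying It\^o's formula to the smooth functional $x\mapsto\|x\|_{L^q}^q$ there, carefully verifying that the quadratic variation contribution is bounded by $\|G(u_\nu)\|_{R(K;W^{1,q})}^2$ and not by the Hilbert-Schmidt norm, and then passing to the limit in the truncation uniformly in $\nu$. This is the technical heart of the argument in \cite{BP01}, and it is precisely where assumption \eqref{gq} is indispensable.
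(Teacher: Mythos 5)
Your proposal is correct and follows essentially the same route as the paper, which proves this theorem only by citing \cite{BP01} and noting that the techniques parallel Section \ref{weak}: a vanishing-viscosity approximation with the boundary condition $\nabla\wedge u=0$, uniform $H$ and $H^{1,2}$ estimates as in Lemma \ref{up}, the new $L^{q}$ vorticity estimate controlled by the $\gamma$-radonifying bound \eqref{gq} and promoted to $H^{1,q}$ through the elliptic system \eqref{24}, followed by the tightness, Skorohod and martingale-representation scheme of Section \ref{weak}. You also correctly single out the It\^{o}-type argument for $\|\cdot\|_{L^{q}}^{q}$ (carried out at the Galerkin level, with the quadratic variation bounded by the $R(K;W^{1,q})$-norm rather than a Hilbert--Schmidt norm) as the technical heart of \cite{BP01}.
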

\begin{proof} See \cite{BP01}. 
\end{proof}

\section{Appendix}

For any Progressively measurable process 
$f\in L^{2}(\Omega\times[0,T]; L_{2}(K,H))$ denote by $I(f)$ the Ito 
integral defined as
$$I(f)(t) = \int_{0}^{t}f(s)dw(s),\ t\in [0,T].$$
$I(f)$ is a progressively measurable process in $L^{2}(\Omega\times[0,T];H)$.

\begin{lem}\label{integral}
Let $p\geq 2$ and $\gamma<1/2$ be given. Then for any progressively 
measurable process $f\in L^{2}(\Omega\times[0,T]; L_{2}(K,H))$, we have
$$I(f)\in L^{p}(\Omega; W^{\gamma,2}(0,T; H))$$
and there exists a constant $C(p,\gamma)>0$ independent of $f$ such that

$$E\p I(f)\p^{p}_{W^{\gamma,2}(0,T; H)}\leq C(p,\gamma)
E\int_{0}^{T}\p f\p^{p}_{L_{2}(K; H)}dt.$$
\end{lem}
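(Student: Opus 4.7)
The plan is to decompose the $W^{\gamma,2}(0,T;H)$ norm into its $L^{p}$-in-time piece and its Gagliardo seminorm, and control each separately via the Burkholder--Davis--Gundy (BDG) inequality combined with H\"older's inequality and Fubini. The key observation is that for $s<t$ the increment $I(f)(t)-I(f)(s)=\int_{s}^{t}f(r)\,dw(r)$ is again the It\^o integral of a progressively measurable process, which makes BDG directly applicable to both $I(f)(t)$ and to its increments.

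For the $L^{p}(0,T;H)$ piece, BDG applied to the martingale $I(f)$ yields $E|I(f)(t)|^{p}\leq C_{p}\,E\bigl(\int_{0}^{t}\|f(r)\|_{L_{2}(K,H)}^{2}\,dr\bigr)^{p/2}$. Since $p/2\geq 1$, H\"older's inequality gives $\bigl(\int_{0}^{t}\|f\|^{2}\,dr\bigr)^{p/2}\leq t^{p/2-1}\int_{0}^{t}\|f(r)\|^{p}\,dr$. Integrating in $t\in(0,T)$ and applying Fubini produces a bound $\leq C_{p}T^{p/2}\,E\int_{0}^{T}\|f(r)\|_{L_{2}(K,H)}^{p}\,dr$, of the required shape.

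The main task is the Gagliardo seminorm $\int_{0}^{T}\int_{0}^{T}\frac{|I(f)(t)-I(f)(s)|^{p}}{|t-s|^{1+\gamma p}}\,dt\,ds$. By BDG and the same H\"older step applied to the sub-interval $(s,t)$, one obtains $E|I(f)(t)-I(f)(s)|^{p}\leq C_{p}(t-s)^{p/2-1}\,E\int_{s}^{t}\|f(r)\|_{L_{2}(K,H)}^{p}\,dr$ for $s<t$ (and symmetrically for $t<s$). Inserting this into the double integral and applying Fubini with the three-variable constraint $0<s<r<t<T$ bounds the seminorm by $2C_{p}\int_{0}^{T}E\|f(r)\|^{p}\,I(r)\,dr$, where $I(r):=\int_{0}^{r}\int_{r}^{T}(t-s)^{p/2-2-\gamma p}\,dt\,ds$.

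The delicate step---and the only place the assumption $\gamma<1/2$ is used---is showing $\sup_{r\in(0,T)}I(r)<\infty$. Substituting $u=t-r$ and $v=r-s$ reduces the question to finiteness of $\iint_{(0,T-r)\times(0,r)}(u+v)^{p/2-2-\gamma p}\,du\,dv$, uniformly in $r$. A polar-coordinate estimate near the singular point $(u,v)=(0,0)$ shows the integrand is controlled by $\rho^{p/2-2-\gamma p}$ with area element $\rho\,d\rho\,d\theta$, so local integrability requires the exponent to exceed $-2$, i.e.\ $p(1/2-\gamma)>0$, which holds exactly under our hypothesis $\gamma<1/2$. This also identifies $1/2$ as the sharp threshold for the estimate. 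Combining the two bounds yields the claimed inequality with a constant $C(p,\gamma)$ depending on $T$.
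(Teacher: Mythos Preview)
Your argument is correct. The paper does not actually prove this lemma; its proof consists solely of the citation ``see \cite{FG95}'', so there is no in-paper argument to compare against. Your BDG-plus-H\"older-plus-Fubini proof is precisely the standard derivation (and is essentially the argument given in the cited reference): control the $L^{p}$-in-time piece and the Gagliardo seminorm separately, use BDG on increments to get the factor $(t-s)^{p/2-1}$, and then observe that the resulting triple integral is finite exactly when $\gamma<1/2$. The only cosmetic remark is that your constant $C(p,\gamma)$ inherits a dependence on $T$, which is harmless here since $T$ is fixed throughout the paper and the statement only requires independence from $f$.
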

\begin{proof} see \cite{FG95}.
\end{proof}

\begin{thm}\label{compact1}
Let $B_{0}\subset B\subset B_{1}$ be Banach spaces, $B_{0}$ and $B_{1}$ 
reflexive with compact embedding of $B_{0}$ in $B_{1}$. Let $p\in (1,\infty)$
and $\gamma\in(0,1)$ be given. Let $X$ be the space
$$X=L^{p}(0,T; B_{0})\cap W^{\gamma,2}(0,T; B_{1})$$
endowed with the natural norm. Then the embedding of $X$ in
$L^{p}(0,T; B_{0})$ is compact.
\end{thm}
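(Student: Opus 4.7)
The plan is to treat this as a fractional Aubin--Lions--Simon compactness theorem, with the convention that the target space must be $L^{p}(0,T;B)$ (since $X$ is by definition continuously embedded in $L^{p}(0,T;B_{0})$ and the literal statement would be vacuous otherwise, and moreover the existence of the intermediate $B$ in the hypothesis is needed only to formulate Ehrling's inequality). Under the natural reading where $B_{0}\hookrightarrow B$ is compact and $B\hookrightarrow B_{1}$ continuous, the scheme is: extract a weak subsequence by reflexivity, then apply a Fr\'echet--Kolmogorov criterion in $L^{p}(0,T;B)$, with the time-increment control coming from the fractional Sobolev seminorm.

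First I would take a bounded sequence $\{u_{n}\}\subset X$. Since $B_{0}$ and $B_{1}$ are reflexive and $p,2\in(1,\infty)$, both $L^{p}(0,T;B_{0})$ and $W^{\gamma,2}(0,T;B_{1})$ are reflexive, hence so is $X$; after extraction $u_{n}\rightharpoonup u$ in $X$. Subtracting $u$, the task reduces to extracting a subsequence that converges strongly to $0$ in $L^{p}(0,T;B)$ from a sequence $\{u_{n}\}$ that is bounded in $X$ and weakly null in $L^{p}(0,T;B_{0})\cap W^{\gamma,2}(0,T;B_{1})$.

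Next I would verify the Fr\'echet--Kolmogorov hypotheses in $L^{p}(0,T;B)$. Uniform $L^{p}(0,T;B)$ boundedness is immediate from the continuous inclusion $B_{0}\hookrightarrow B$. The substantive point is the equicontinuity of time translates. Ehrling's lemma produces, for every $\varepsilon>0$, a constant $C(\varepsilon)$ with $\|v\|_{B}\le\varepsilon\|v\|_{B_{0}}+C(\varepsilon)\|v\|_{B_{1}}$. Raising this to the $p$-th power, applied to $v=u_{n}(t+h)-u_{n}(t)$ and integrated in $t$, the $B_{0}$-contribution is bounded by $2^{p}\varepsilon^{p}\|u_{n}\|_{L^{p}(0,T;B_{0})}^{p}$ (made small uniformly in $n$ by choosing $\varepsilon$), and the $B_{1}$-contribution is controlled by the standard fractional estimate
\[
\int_{0}^{T-h}\|u_{n}(t+h)-u_{n}(t)\|_{B_{1}}^{2}\,dt\le C\,h^{2\gamma}\,\|u_{n}\|_{W^{\gamma,2}(0,T;B_{1})}^{2},
\]
which follows directly from the Sobolev--Slobodeckij seminorm definition by integrating in a narrow strip around the diagonal.

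The main obstacle is reconciling the $L^{p}$-in-time scale of the target with the $L^{2}$-in-time scale of the fractional seminorm when $p\neq 2$. For $p=2$ the combination is immediate; for $p>2$ one interpolates the $B_{1}$-valued increment between $L^{2}_{t}$ (controlled by the $W^{\gamma,2}$ norm, giving an $h^{2\gamma}$ gain) and $L^{\infty}_{t}$ or $L^{p}_{t}$ (controlled by the uniform bound in $L^{p}(0,T;B_{0})\hookrightarrow L^{p}(0,T;B_{1})$), producing a strictly positive power of $h$ at the cost of reducing $\gamma$. Once uniform vanishing of $L^{p}$-time translates is established, Fr\'echet--Kolmogorov yields a strongly convergent subsequence in $L^{p}(0,T;B)$, whose limit must coincide with the weak limit $0$, completing the proof.
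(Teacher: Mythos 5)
The paper offers essentially nothing to compare against here: its ``proof'' of Theorem~\ref{compact1} is the bare citation \cite{FG95}, and your corrected reading of the statement --- compact embedding of $B_{0}$ into $B$, continuous embedding $B\subset B_{1}$, target space $L^{p}(0,T;B)$ --- is exactly Theorem~2.1 of that reference, so your diagnosis of the typos is sound and your scheme (reflexive extraction, Ehrling's inequality, the diagonal-strip translate estimate, a Kolmogorov-type criterion) is the standard route. One smaller omission first: in $L^{p}(0,T;B)$ with $B$ infinite-dimensional, uniform boundedness plus equicontinuity of time translates is \emph{not} a complete Fr\'echet--Kolmogorov hypothesis; one also needs a spatial compactness condition, e.g.\ relative compactness in $B$ of the averages $\int_{t_{1}}^{t_{2}}u_{n}\,dt$ (Simon's criterion) --- take $u_{n}\equiv e_{n}$ constant in time with $(e_{n})$ orthonormal to see that translates alone cannot suffice. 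Here this condition is immediate, since the averages are bounded in $B_{0}$ and $B_{0}\hookrightarrow B$ is compact, but it must be stated.

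The genuine gap is the step you yourself flagged as ``the main obstacle''. For $p>2$ you have smallness of the increments in $L^{2}_{t}(B_{1})$ (of order $h^{\gamma}$) and only \emph{boundedness} in $L^{p}_{t}(B_{1})$; interpolation then yields smallness in $L^{q}_{t}$ only for $2\le q<p$, because $L^{p}$ does not sit strictly between $L^{2}$ and $L^{p}$, and the alternative you invoke, an $L^{\infty}_{t}$ bound, is simply not available from membership in $X$. This cannot be repaired: with the seminorm taken literally in $L^{2}$, the statement is \emph{false} for $p>2$ and $\gamma\le\frac{1}{2}-\frac{1}{p}$. Indeed, take $B_{0}=B=B_{1}=\R$, $T=1$, and $u_{n}=n^{1/p}\chi_{[0,1/n]}$; then $\|u_{n}\|_{L^{p}(0,1)}=1$ while
\[
\int_{0}^{1}\int_{0}^{1}\frac{|u_{n}(t)-u_{n}(s)|^{2}}{|t-s|^{1+2\gamma}}\,ds\,dt
\le C\,n^{2/p}\Bigl(\tfrac{1}{n}\Bigr)^{1-2\gamma}=C\,n^{2/p-1+2\gamma},
\]
which is bounded in $n$ precisely when $\gamma\le\frac{1}{2}-\frac{1}{p}$ (the integral is finite for an indicator since $\gamma<\frac{1}{2}$); the sequence is thus bounded in $X$, converges to $0$ a.e.\ and in every $L^{q}$ with $q<p$, yet no subsequence converges in $L^{p}$. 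What rescues the theorem is the paper's own misleading notation: the space called $W^{\gamma,2}(0,T;H)$ in Section~2 is actually defined with the exponent $p$ in both time integrals, i.e.\ it is $W^{\gamma,p}$, matching \cite{FG95}. With matched exponents your diagonal-strip computation directly gives $\|u_{n}(\cdot+h)-u_{n}\|_{L^{p}(0,T-h;B_{1})}\le C\,h^{\gamma}\|u_{n}\|_{W^{\gamma,p}(0,T;B_{1})}$, no interpolation is needed, and your argument closes; under the literal $L^{2}$ seminorm your route survives only in the regime $\gamma>\frac{1}{2}-\frac{1}{p}$, where $W^{\gamma,2}(0,T;B_{1})\hookrightarrow L^{r}(0,T;B_{1})$ for some $r>p$ and the interpolation you propose becomes legitimate.
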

\begin{proof}\cite{FG95}.
\end{proof}

\begin{thm}\label{compact2}
Let $B_{1}$ and $\tilde{B}$ two Banach spaces such that 
$B_{1}\subset\tilde{B}$ with compact embedding. If the real numbers 
$\gamma\in(0,1)$ and $p>1$ satisfy
$$\gamma p>1$$
then the space $W^{\gamma,2}(0,T; B_{1})$ is compactly embedded into
$C([0,T]; {\tilde B})$.
\end{thm}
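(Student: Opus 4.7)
The plan is to combine a fractional Sobolev embedding in time (of Garsia--Rodemich--Rumsey type) with the Arzel\`a--Ascoli theorem. The assumption $\gamma p>1$ is precisely the threshold at which $W^{\gamma,2}(0,T;B_1)$ (in the sense of the Gagliardo seminorm from the preliminaries, where the exponent $p$ appears inside the double integral) embeds into a space of H\"older continuous $B_1$-valued functions; once this embedding is in hand, the compactness into $C([0,T];\tilde B)$ follows essentially from the compact embedding $B_1\hookrightarrow\tilde B$.

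The first step is a pointwise estimate: for any $u\in W^{\gamma,2}(0,T;B_1)$ and a.e.\ $s,t\in[0,T]$,
$$\|u(t)-u(s)\|_{B_1}\leq C\,|t-s|^{\gamma-1/p}\,\left(\int_0^T\!\!\int_0^T\frac{\|u(a)-u(b)\|_{B_1}^p}{|a-b|^{1+\gamma p}}\,da\,db\right)^{1/p}.$$
I would establish this by the Garsia--Rodemich--Rumsey argument: pick a Lebesgue point $t_0$, construct a sequence $t_k\to t_0$ of dyadic midpoints, control each increment $\|u(t_{k+1})-u(t_k)\|_{B_1}$ by the Gagliardo integrand averaged on a shrinking interval, and sum the resulting geometric series using $\gamma p>1$. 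The outcome is that every element of $W^{\gamma,2}(0,T;B_1)$ admits a representative in $C^{0,\gamma-1/p}([0,T];B_1)$, and the inclusion is continuous.

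For the second step, let $\{u_n\}$ be a bounded sequence in $W^{\gamma,2}(0,T;B_1)$. By the previous step the sequence is uniformly bounded in $C^{0,\gamma-1/p}([0,T];B_1)$, so:
\begin{enumerate}
\item for each $t\in[0,T]$ the set $\{u_n(t)\}$ is bounded in $B_1$, hence relatively compact in $\tilde B$ by the hypothesis of compact embedding;
\item the uniform H\"older estimate gives equicontinuity as a family of $\tilde B$-valued maps, since $\|\cdot\|_{\tilde B}\leq C\|\cdot\|_{B_1}$.
\end{enumerate}
By the Arzel\`a--Ascoli theorem in $C([0,T];\tilde B)$, a subsequence of $\{u_n\}$ converges uniformly in $\tilde B$, which is exactly compactness of the embedding $W^{\gamma,2}(0,T;B_1)\hookrightarrow C([0,T];\tilde B)$.

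The main obstacle is the first step: the Garsia--Rodemich--Rumsey argument producing the H\"older modulus from the fractional Gagliardo norm is classical but technical, and it is the only place the numerical condition $\gamma p>1$ is actually used. The Arzel\`a--Ascoli step and the exploitation of the compact embedding $B_1\hookrightarrow\tilde B$ are routine once the H\"older bound is available.
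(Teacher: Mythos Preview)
Your argument is correct and is the standard route to this compactness result. The paper itself does not supply a proof: it simply cites \cite{FG95}. The Flandoli--Gatarek proof proceeds along the same lines you outline, namely using the condition $\gamma p>1$ to obtain a continuous embedding of $W^{\gamma,2}(0,T;B_1)$ into a H\"older class $C^{0,\gamma-1/p}([0,T];B_1)$ and then invoking Arzel\`a--Ascoli together with the compact inclusion $B_1\hookrightarrow\tilde B$. Your use of the Garsia--Rodemich--Rumsey lemma to produce the H\"older modulus is a perfectly valid (and arguably the cleanest) way to carry out the first step; one can alternatively appeal to the abstract fractional Sobolev embedding theorem in time, but the content is the same. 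Nothing further is needed.
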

\begin{proof}\cite{FG95}.
\end{proof}

\begin{lem}\label{parab} 
Let $v_{0}\in H_{0}^{1}(D)$ and $g\in L^{2}\left((0,T)\times D\right)$ and assume that 
$u$ is given such that $u\in L^{2}(0,T; [H^{1}(D)]^{2})$. Then 
the following equation

\be\left\{\begin{array}{lr}
                \frac{\pat v}{\pat t} + (u\cdot\nabla)v
                 = \nu\Delta v + g, &{\rm in}\ (0,T)\times D\\
                 v = 0, &{\rm on}\ (0,T)\times \pat D\\
                 v|_{t=0} = v_{0},&{\rm in}\ D
           \end{array} \right. \label{38}\ee
has a unique solution
$$v\in C([0,T]; H_{0}^{1}(D))\cap L^{2}(0,T; H^{2}(D))
\cap H^{1}(0,T; L^{2}(D)).$$
Assuming only $v_{0}\in L^{2}(D)$ and $g\in L^{2}(0,T;H^{-1}(D))$, it has 
a unique solution 
$$v\in C([0,T]; L^{2}(D))\cap L^{2}(0,T; H_{0}^{1}(D))
\cap H^{1}(0,T; H^{-1}(D)).$$
\end{lem}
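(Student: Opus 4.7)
The plan is to establish the result by a standard Faedo--Galerkin procedure combined with parabolic energy estimates and a Gronwall uniqueness argument, handling each of the two regularity regimes separately. The equation is a linear heat equation with Dirichlet boundary data perturbed by a time--dependent first--order transport term $(u\cdot\nabla)v$ with $u\in L^2(0,T;[H^1(D)]^2)$, so the natural strategy is to treat the transport as a lower--order perturbation of $\partial_t - \nu\Delta$.

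For the construction, I would fix the Dirichlet Laplacian $-\Delta_D$ on $D$, let $\{e_k\}$ be its $L^2$--orthonormal eigenbasis, set $V_n=\mathrm{span}(e_1,\dots,e_n)$ with associated projection $P_n$, and look for $v_n(t)=\sum_k c_k^n(t)e_k$ solving
\begin{equation*}
\langle \partial_t v_n,e_k\rangle + \nu\langle \nabla v_n,\nabla e_k\rangle + \langle (u\cdot\nabla)v_n,e_k\rangle = \langle g,e_k\rangle,\qquad v_n(0)=P_n v_0,
\end{equation*}
for $k=1,\dots,n$. This is a linear ODE system with $L^1(0,T)$ coefficients (since $u\in L^2_tH^1_x$), hence uniquely solvable by Carath\'eodory theory on $[0,T]$.

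For the a priori estimates, in the weak--data case I would test with $v_n$: using the identity $\int_D (u\cdot\nabla v_n)v_n = -\tfrac{1}{2}\int_D (\nabla\cdot u)v_n^2$ together with a 2D Ladyzhenskaya bound and Young's inequality produces
\begin{equation*}
\tfrac{d}{dt}\|v_n\|^2 + \nu\|\nabla v_n\|^2 \leq C\|g\|_{H^{-1}}^2 + C\|u\|_{H^1}^2\|v_n\|^2,
\end{equation*}
and Gronwall with $\|u\|_{H^1}^2 \in L^1(0,T)$ gives uniform bounds in $L^\infty_tL^2_x\cap L^2_tH^1_x$. For the strong--data case I would test with $-\Delta v_n$ and bound the transport term via the 2D Gagliardo--Nirenberg estimate $\|u\cdot\nabla v_n\|_{L^2}\leq C\|u\|^{1/2}\|u\|_{H^1}^{1/2}\|\nabla v_n\|^{1/2}\|\Delta v_n\|^{1/2}$, absorbing a fraction of $\nu\|\Delta v_n\|^2$ by Young, which yields uniform bounds in $L^\infty_t H^1_0\cap L^2_tH^2$. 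In both regimes the equation itself provides the matching bound on $\partial_t v_n$ in $L^2_tH^{-1}$ or $L^2_tL^2$, so the Aubin--Lions--Simon lemma produces strong convergence of a subsequence in $L^2(0,T;L^2)$, which is enough to pass to the limit in the bilinear transport term against smooth test functions; continuity of $v$ into the appropriate Sobolev space in time then follows from the standard embedding $L^2_tX\cap H^1_tX'\hookrightarrow C([0,T];\widetilde X)$.

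Uniqueness in either regime then follows from taking two solutions $v_1,v_2$, noting that the difference $w=v_1-v_2$ solves the same linear problem with zero data, and repeating the first energy estimate to get $\tfrac{d}{dt}\|w\|^2\leq C\|u\|_{H^1}^2\|w\|^2$, so Gronwall forces $w\equiv 0$. The main subtlety is the strong--data regime: because $u$ is only assumed in $L^2_tH^1_x$ rather than having any spatial $L^\infty$ or temporal $L^\infty$ control, the transport term is not immediately subordinate to the dissipation, and one must choose the Gagliardo--Nirenberg splitting carefully so that the coefficient in the Gronwall inequality---here $\|u\|^2\|u\|_{H^1}^2$---lies in $L^1(0,T)$; this is automatic in the applications of the lemma, since the $u$ appearing there is the Navier--Stokes solution of Proposition~\ref{u2} and hence in particular belongs to $L^\infty(0,T;H)$.
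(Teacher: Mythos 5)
Your proposal follows essentially the same route as the paper's proof: a Faedo--Galerkin construction combined with exactly the two energy estimates the paper performs (testing with $v$ and with $-\Delta v$), Ladyzhenskaya/Gagliardo--Nirenberg interpolation to control the transport term, Gronwall, passage to the limit, and uniqueness by rerunning the first estimate on the difference of two solutions. Your closing remark on the strong-data step is well taken: the Gronwall coefficient there is $|u|_{L^{2}}^{2}|u|_{H^{1}}^{2}$, which is not in $L^{1}(0,T)$ under the bare hypothesis $u\in L^{2}(0,T;[H^{1}(D)]^{2})$, a point the paper glosses over by writing ``as in step 1'' (where only $|u|_{L^{2}}|u|_{H^{1}}\in L^{1}(0,T)$ is justified), and your fix via the applications' bound $u\in L^{\infty}(0,T;H)$ from Proposition~\ref{u2} is the correct one.
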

\begin{proof}
{\bf Step 1.} We have the following a priori estimates:
\beq
\frac{1}{2}\frac{d}{dt}|v|_{L^{2}(D)}^{2} &=& \int_{D}(\nu\Delta v + g 
- (u\cdot\nabla)v)v
\nonumber\\
&\leq& -\nu|v|_{H^{1}(D)}^{2} + \frac{\nu}{4}|v|_{H^{1}(D)}^{2}
+ C_{0}|g|_{H^{-1}(D)}^{2}\nonumber\\ 
& &+ C_{1}|v|_{H^{1}(D)}|v|_{L^{4}(D)}|u|_{[L^{4}(D)]^{2}}\nonumber
\eeq
and
\beq
|v|_{H^{1}(D)}|v|_{L^{4}(D)}|u|_{[L^{4}(D)]^{2}}&\leq& 
\frac{\nu}{4}|v|_{H^{1}(D)}^{2}\nonumber\\ 
& &+ C_{2}|v|_{L^{2}(D)}|v|_{H^{1}(D)}|u|_{[L^{2}(D)]^{2}}|u|_{[H^{1}(D)]^{2}}
\nonumber\\
&\leq& \frac{\nu}{2}|v|_{H^{1}(D)}^{2}
+ C_{3}|v|_{L^{2}(D)}^{2}|u|_{[L^{2}(D)]^{2}}|u|_{[H^{1}(D)]^{2}}
\nonumber
\eeq
so that

$$\frac{1}{2}\frac{d}{dt}|v|_{L^{2}(D)}^{2}
- \frac{\nu}{4}|v|_{H^{1}(D)}^{2}\leq
C_{0}|g|_{H^{-1}(D)}^{2} 
+ C_{4}|v|_{L^{2}(D)}^{2}|u|_{[L^{2}(D)]^{2}}|u|_{[H^{1}(D)]^{2}}.$$

Whence (by Gronwall lemma and again by the same inequality, using the 
regularity of $u$ which implies that $|u|_{[L^{2}(D)]^{2}}|u|_{[H^{1}(D)]^{2}}
\in L^{1}(0,T)$)

$$\sup_{t\in [0,T]}|v(t)|_{L^{2}(D)}^{2} < \infty,
\ \ \ \ \int_{0}^{T}|v(s)|_{H^{1}(D)}^{2}ds <\infty.$$

Proving these estimates for classical Galerkin approximation and passing to
the limit in the classical way, we prove that there exists a solution
$$v\in C([0,T]; L^{2}(D))\cap L^{2}(0,T; H_{0}^{1}(D))
\cap H^{1}(0,T; H^{-1}(D)).$$
The uniqueness is proved by the very similar estimates.\\
{\bf Step 2.} We have the following additional a priori estimate
\beq
\frac{1}{2}\frac{d}{dt}|\nabla v|_{L^{2}(D)}^{2} &=& 
- \int_{D}(\nu\Delta v + g - (u\cdot\nabla)v)\Delta v
\nonumber\\
&\leq& -\nu|\Delta v|_{L^{2}(D)}^{2} 
+ \frac{\nu}{4}|\Delta v|_{L^{2}(D)}^{2}
+ C_{5}|g|_{L^{2}(D)}^{2}\nonumber\\
&+& C_{6}|\Delta v|_{L^{2}(D)}|v|_{W^{1,4}(D)}|u|_{[L^{4}(D)]^{2}}\nonumber
\eeq
and
\beq
|\Delta v||v|_{W^{1,4}(D)}|u|_{[L^{4}(D)]^{2}}&\leq&
\frac{\nu}{4}|\Delta v|_{L^{2}(D)}^{2}\nonumber\\ 
&+& C_{7}|v|_{H^{1}(D)}|v|_{H^{2}(D)}|u|_{[L^{2}(D)]^{2}}|u|_{[H^{1}(D)]^{2}}
\nonumber\\
&\leq&\frac{\nu}{2}|\Delta v|_{L^{2}(D)}^{2}
+ C_{8}|v|_{H^{1}(D)}|u|_{[L^{2}(D)]^{2}}|u|_{[H^{1}(D)]^{2}}
\nonumber
\eeq
so that
$$\frac{1}{2}\frac{d}{dt}|\nabla v|_{L^{2}(D)}^{2} 
- \frac{\nu}{4}|\Delta v|_{L^{2}(D)}^{2}\leq
C_{5}|g|_{L^{2}(D)}^{2} 
+ C_{9}|\nabla v|_{L^{2}(D)}|u|_{[L^{2}(D)]^{2}}|u|_{[H^{1}(D)]^{2}}.$$
Whence (as in step 1)

$$\sup_{[0,T]}|\nabla v|_{[L^{2}(D)]^{2}}^{2} < \infty,
\ \ \ \ \int_{0}^{T}|\Delta v|_{L^{2}(D)}^{2}ds <\infty.$$
Proving these estimates for classical Galerkin approximations and passing 
to the limit in the classical way, we prove that the solution of step 1
satisfies the regularity required by the lemma.
\end{proof}

\begin{lem}\label{mes}
Let $X$ and $Y$ be two separable Banach spaces and $\Lambda$ a multiple-valued
mapping from $X$ to the set of nonempty closed subsets of $Y$, the graph of 
$\Lambda$ being closed.\\
Then $\Lambda$ admits a universally Radon measurable section, i.e., there 
exists a mapping $L$ from $X$ to $Y$, such that
$$L(x)\in\Lambda(x)\ \ \ \forall x\in X,$$
and $L$ is measurable for any Radon measure defined on the Borel sets of $X$.
\end{lem}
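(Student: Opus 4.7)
This statement is a version of the Kuratowski--Ryll-Nardzewski measurable selection theorem, applied to a closed-valued multifunction with closed graph between Polish (separable complete metric) spaces. My plan is to reduce the problem to weak measurability of $\Lambda$ with respect to the universal $\sigma$-algebra, and then to build the selection by the standard successive refinement procedure.

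The first and conceptually key step is to show that for every open set $U\subset Y$ the preimage
\[
\Lambda^{-1}(U):=\{x\in X:\Lambda(x)\cap U\neq\emptyset\}
\]
is universally Radon measurable in $X$. Writing $\pi_X:X\times Y\to X$ for the canonical projection, one has $\Lambda^{-1}(U)=\pi_X(\mathrm{Gr}(\Lambda)\cap(X\times U))$. Since $\mathrm{Gr}(\Lambda)$ is closed by hypothesis and $X\times U$ is open, their intersection is Borel in the Polish space $X\times Y$. By a classical theorem of Lusin-Souslin, continuous images (in particular projections) of Borel subsets of Polish spaces are analytic, and analytic sets are universally measurable with respect to every finite Borel (Radon) measure. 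Hence $\Lambda^{-1}(U)$ belongs to the universal $\sigma$-algebra $\mathcal{U}(X)$. In particular, $\{x:\Lambda(x)\cap B(y,r)\neq\emptyset\}\in\mathcal{U}(X)$ for every open ball of $Y$.

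The second step is the iterative construction of the selection. Fix a dense sequence $(y_n)_{n\geq1}$ in $Y$. I shall build inductively $\mathcal{U}(X)$-measurable countably-valued maps $L_k:X\to Y$ satisfying
\[
d(L_k(x),\Lambda(x))<2^{-k}\qquad\text{and}\qquad \|L_k(x)-L_{k-1}(x)\|_Y<3\cdot 2^{-k},\qquad k\geq 1.
\]
For $k=0$, cover $X$ by the universally measurable sets $A_n^{0}:=\{x:\Lambda(x)\cap B(y_n,1)\neq\emptyset\}$ (whose union is $X$ by density of $(y_n)$) and set $L_0(x)=y_{n(x)}$ with $n(x)$ the least index for which $x\in A_n^{0}$. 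For the inductive step, set
\[
A_n^{k}:=\{x:L_{k-1}(x)\in B(y_n,2\cdot 2^{-k})\}\cap\{x:\Lambda(x)\cap B(y_n,2^{-k})\neq\emptyset\};
\]
each such set is universally measurable by Step~1 and by measurability of $L_{k-1}$. A triangle-inequality argument, using the inductive hypothesis $d(L_{k-1}(x),\Lambda(x))<2^{-(k-1)}$, shows that $\bigcup_n A_n^{k}=X$, so the definition $L_k(x):=y_{n(x)}$ with $n(x)$ the least index such that $x\in A_n^{k}$ produces a $\mathcal{U}(X)$-measurable function with the required two properties.

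Finally, the Cauchy property makes $(L_k(x))_k$ converge in $Y$ to some $L(x)$, uniformly in $x$. Since $d(L_k(x),\Lambda(x))<2^{-k}$ and $\Lambda(x)$ is closed, the limit satisfies $L(x)\in\Lambda(x)$. As a pointwise limit of $\mathcal{U}(X)$-measurable maps, $L$ is itself universally Radon measurable, which completes the construction. The main obstacle is the inductive step: one must simultaneously maintain the approximation condition on $\Lambda(x)$ and the Cauchy condition on $L_k$, which forces the slightly inflated radius $2\cdot 2^{-k}$ in the definition of $A_n^{k}$ and relies on the universal measurability from Step~1 to make the ``least admissible index'' $n(x)$ a measurable choice.
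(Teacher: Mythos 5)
Your proposal is correct, and it is worth noting that the paper itself gives \emph{no} proof of Lemma \ref{mes}: the statement is quoted as a known selection theorem (it is the one used in \cite{BT73} for the measurability of solutions constructed pathwise), so your argument fills in what the paper leaves to the literature. Your route is the natural one and it is sound: since separable Banach spaces are Polish, $\mathrm{Gr}(\Lambda)\cap(X\times U)$ is Borel, its projection $\Lambda^{-1}(U)$ is analytic, and analytic sets are universally measurable --- this is exactly why the conclusion can only be ``universally Radon measurable'' rather than Borel measurable, a point you correctly identify (in infinite dimensions $Y$ is not $\sigma$-compact, so there is no way to make the projection Borel and the detour through analytic sets is genuinely needed). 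Given weak measurability with respect to the universal $\sigma$-algebra $\mathcal{U}(X)$, the Kuratowski--Ryll-Nardzewski successive-approximation scheme goes through verbatim, since $\mathcal{U}(X)$ is a $\sigma$-algebra and the construction only uses countable unions, differences, and pointwise limits; your bookkeeping in the inductive step is right (the strict inequality $d(L_{k-1}(x),\Lambda(x))<2^{-(k-1)}$ leaves exactly the slack needed to place a dense point $y_n$ inside both balls, so $\bigcup_n A_n^k=X$), and closedness of each $\Lambda(x)$ plus completeness of $Y$ yields $L(x)\in\Lambda(x)$ in the limit. Two cosmetic remarks: the assertion that continuous images of Borel sets are analytic is Souslin's theorem (the Lusin--Souslin theorem proper concerns \emph{injective} continuous images, where the image is Borel); and an even shorter alternative would be to apply the Jankov--von Neumann uniformization theorem directly to the closed (hence analytic) graph, obtaining in one step a selection measurable for the $\sigma$-algebra generated by analytic sets, which is contained in $\mathcal{U}(X)$ --- your construction is the more self-contained version of the same idea.
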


{\bf Acknowledgements}: Hakima Bessaih is extremely grateful to the Bernoulli center in Lausanne where these notes have started as a short course and to the IMA where most of these notes have been written. Moreover, she would like to thank the anonymous referee for pointing out a few mistakes in the preliminary version of the paper. 

\end{document}